\newtheorem{theorem}{Theorem}[section]
\newtheorem{definition}[theorem]{Definition}
\newtheorem{lemma}[theorem]{Lemma}
\newtheorem{proposition}[theorem]{Proposition}
\newtheorem{corollary}[theorem]{Corollary}
\begin{document}
\textwidth 150mm \textheight 225mm
\title{Integer colorings with no rainbow 3-term arithmetic progression
\thanks{Supported by the National Natural Science Foundation of China (No. 11871398)
and China Scholarship Council (No. 201906290174).}
}
\author{{Xihe Li$^{1,2}$, Hajo Broersma$^{2,}$\thanks{Corresponding author.}, Ligong Wang$^{1}$}\\
{\small $^{1}$ School of Mathematics and Statistics,}\\ {\small Northwestern Polytechnical University, Xi'an, Shaanxi 710129, PR China}\\
{\small $^{2}$ Faculty of Electrical Engineering, Mathematics and Computer Science,}\\ {\small University of Twente, P.O. Box 217, 7500 AE Enschede, The Netherlands}\\
{\small E-mail: lxhdhr@163.com; h.j.broersma@utwente.nl; lgwangmath@163.com}}
\date{}
\maketitle
\begin{center}
\begin{minipage}{120mm}
\vskip 0.3cm
\begin{center}
{\small {\bf Abstract}}
\end{center}
{\small
In this paper, we study the rainbow Erd\H{o}s-Rothschild problem with respect to 3-term arithmetic progressions. We obtain the asymptotic number of $r$-colorings of $[n]$ without rainbow 3-term arithmetic progressions, and we show that the typical colorings with this property are 2-colorings. We also prove that $[n]$ attains the maximum number of rainbow 3-term arithmetic progression-free $r$-colorings among all subsets of $[n]$. Moreover, the exact number of rainbow 3-term arithmetic progression-free $r$-colorings of $\mathbb{Z}_p$ is obtained, where $p$ is any prime and $\mathbb{Z}_p$ is the cyclic group of order $p$.
\vskip 0.1in \noindent {\bf Keywords}: \ Integer coloring; arithmetic progression; container method \vskip
0.1in \noindent {\bf AMS Subject Classification (2020)}: \ 11B25; 11B75; 05C55
}
\end{minipage}
\end{center}

\section{Introduction}
\label{sec:ch-intro}

Two fundamental topics in extremal combinatorics are counting discrete structures that have certain properties and analyzing their typical properties. One of the major problems, dating back to the 1970s and initiated by Erd\H{o}s, Kleitman and Rothschild \cite{ErKR}, is to determine the number of $H$-free graphs on $n$ vertices and establish their typical structure. We refer to \cite{BLPS,BMSW,KOTZ,MoSa} and references therein for related results. In the setting of integers, this problem was introduced by Cameron and Erd\H{o}s \cite{CaEr} who studied the number of subsets of positive integers satisfying some constraint. We refer to \cite{BLST18,Gre,Sap} for the number of sum-free subsets, to~\cite{BaLS} for the number of subsets with no $k$-term arithmetic progression, and to~\cite{LiPa} for results on multiplicative Sidon sets.

In the context of colored discrete structures, Erd\H{o}s and Rothschild \cite{Erd74} asked which graphs on $n$ vertices admit the maximum number of $r$-edge-colorings without a monochromatic subgraph $H$. This  Erd\H{o}s-Rothschild problem was extended to edge-colorings of graphs with other forbidden color patterns. For example, the number of rainbow triangle-free $r$-edge-colorings of complete graphs (also known as Gallai colorings) and their typical structure were determined in \cite{BaLi,BaBH}. This problem can also be generalized to other discrete structures. Hoppen, Kohayakawa and Lefmann \cite{HoKL} studied the Erd\H{o}s-Rothschild extension of the celebrated Erd\H{o}s-Ko-Rado theorem. Liu, Sharifzadeh and Staden \cite{LiSS} and H\`{a}n and Jim\'{e}nez \cite{HaJi} determined the maximum number of monochromatic sum-free colorings of integers and of finite abelian groups, respectively.  Recently, motivated by these results, Cheng et al. \cite{CJLWZ} studied the number of rainbow sum-free colorings of integers and their typical structure.

In this paper, we will focus on analogous problems with respect to rainbow 3-term arithmetic progressions. Let $n$ be a positive integer and let $[n]=\{1, 2, \ldots, n\}$. Given a subset $A\subseteq [n]$, a \emph{$k$-term arithmetic progression} ($k$-AP) of $A$ is a sequence in $A$ of the form $a, a+d, a+2d, \ldots, a+(k-1)d$, where $d\in [n]$. For an integer $r\geq 1$ and a subset $A\subseteq [n]$, let $c\colon\, A \to [r]$ be an \emph{$r$-coloring} of $A$. Given an $r$-coloring $c$ of $A$, a $k$-AP of $A$ is called \emph{rainbow} if $c(a+id)\neq c(a+jd)$ for every $0\leq i<j\leq k-1$. An $r$-coloring of $A$ is called \emph{rainbow $k$-AP-free} if it contains no rainbow $k$-AP.

Already back in 1927, Van der Waerden \cite{vdW} proved that for any positive integers $k$ and $r$, if $n$ is sufficiently large, then every $r$-coloring of $[n]$ contains a monochromatic $k$-AP. A strengthening of this result was conjectured by Erd\H{o}s and Tur\'{a}n \cite{ErTu}, and proved by Szemer\'{e}di \cite{Sze} in 1975.

\noindent\begin{theorem}\label{th:Sze} {\normalfont (Szemer\'{e}di's theorem \cite{Sze})} Let $k$ be a positive integer and let $\delta >0$. Then there exists a positive integer $sz(k, \delta)$ such that for all $n\geq sz(k, \delta)$, every subset $C\subseteq [n]$ with $|C|> \delta n$ contains a $k$-AP.
\end{theorem}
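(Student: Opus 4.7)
The plan is to proceed by induction on $k$, using a density increment strategy. The cases $k \leq 2$ are trivial, so the first nontrivial case is Roth's theorem ($k=3$). For this case I would embed $[n]$ into $\mathbb{Z}_N$ with $N$ a prime slightly larger than $2kn$ and exploit the Fourier identity $\sum_{x,d}\mathbf{1}_C(x)\mathbf{1}_C(x+d)\mathbf{1}_C(x+2d) = N\sum_{\xi\in\mathbb{Z}_N}\widehat{\mathbf{1}_C}(\xi)^2\,\widehat{\mathbf{1}_C}(-2\xi)$. The $\xi=0$ term contributes roughly $\delta^3 N^2$, so either all nonzero Fourier coefficients are bounded by some $\eta=\eta(\delta)$, in which case the standard counting step produces a genuine (non-wraparound) $3$-AP, or there is some $\xi \neq 0$ with $|\widehat{\mathbf{1}_C}(\xi)| \geq \eta$. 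In the latter case, a pigeonhole argument over arithmetic progressions on which the character $x \mapsto e^{2\pi i \xi x /N}$ is nearly constant yields a subprogression $P$ with $|C \cap P|/|P| \geq \delta + c\delta^{2}$. Since density is capped by $1$, iterating this increment terminates in at most $O(1/\delta)$ steps, yielding an explicit $sz(3,\delta)$.

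For general $k$, the same dichotomy strategy is applied with a much heavier toolkit. One route is Szemer\'edi's original combinatorial argument: apply the regularity lemma to an auxiliary graph (or, for $k \geq 4$, a hypergraph) whose configurations encode $k$-APs, and then invoke a counting lemma to extract many $k$-APs from any sufficiently dense regular configuration. The alternative analytic route, due to Gowers, replaces the Fourier $L^\infty$ control by the uniformity norm $U^{k-1}$, proves a generalized von Neumann lemma showing that $U^{k-1}$-small functions contribute negligibly to the $k$-AP count, and then invokes an inverse theorem for $U^{k-1}$ to convert $U^{k-1}$-largeness into a density increment on a subprogression. In both routes the induction is closed by iterating the density increment, and the final bound $sz(k,\delta)$ is read off from the number of iterations.

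The main obstacle is obtaining the structure-versus-pseudorandomness dichotomy at level $k$. Already for $k=4$ the naive $L^\infty$-Fourier method fails, since control of $\widehat{\mathbf{1}_C}$ does not control $4$-AP counts; one is therefore forced either to pay a tower-of-exponentials price via the regularity lemma or to develop the inverse theory of the $U^{k-1}$ norms, whose full resolution by Gowers and by Green--Tao--Ziegler is itself a substantial program involving nilsequences. A secondary but unavoidable technicality is transferring between $[n]$ and $\mathbb{Z}_N$ so as to rule out ``wrap-around'' progressions modulo $N$; this is handled routinely by choosing $N \in (2kn, 4kn]$ prime and restricting attention to elements of $C$ lying in an initial segment.
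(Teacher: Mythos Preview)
The paper does not prove this statement at all: Theorem~\ref{th:Sze} is simply quoted from \cite{Sze} and used later as a black box in the proof of Theorem~\ref{th:3-APsubset}. There is therefore no ``paper's own proof'' to compare your proposal against.

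As for the proposal itself, what you have written is a reasonable high-level survey of the known approaches (Roth via Fourier density increment, Szemer\'{e}di's original regularity argument, and the Gowers $U^{k-1}$ route), but it is not a proof. You explicitly defer the entire content for $k\geq 4$ to either the hypergraph regularity lemma or the inverse theorem for the Gowers norms, each of which is a paper-length (or longer) argument in its own right; in particular, the Green--Tao--Ziegler inverse theorem you invoke is far harder than the statement you are trying to establish. If the intention were to supply an actual proof rather than a citation, you would need to commit to one of these routes and carry it out in full, which is well beyond the scope of the present paper and is precisely why the authors simply cite \cite{Sze}.
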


In \cite{ErGr}, Erd\H{o}s and Graham proved a canonical version of Van der Waerden's theorem, that is, for any integer $k\geq 3$, if $n$ is sufficiently large, then every coloring (with any number of colors) of $[n]$ contains either a rainbow $k$-AP or a monochromatic $k$-AP. For more results related to rainbow arithmetic progressions, see \cite{AxFDF,BeSY,BEHH,JLMNR,LlMo,PaTo}.

For any $A\subseteq [n]$ and $r\geq 3$, we use $g_r(A, [n])$ (or simply $g_r(A)$) to denote the number of rainbow 3-AP-free $r$-colorings of $A$. By choosing two of the $r$ colors and coloring the elements of $A$ arbitrarily with these two colors, a lower bound on $g_r(A)$ is
\begin{equation}\label{eq:1}
g_r(A)\geq \binom{r}{2}(2^{|A|}-2)+r=\binom{r}{2}2^{|A|}-r^2+2r.
\end{equation}
For the upper bound, we will prove the following result in Section~\ref{sec:proof}.

\noindent\begin{theorem}\label{th:3-AP} For all integers $r\geq 3$ and any real number $\xi$ with $0<\xi \leq \frac{3}{13+8\log_2 r}$, there exists $n_0\in\mathbb{N}$ such that for all $n\geq n_0$ the following holds. If $A\subseteq [n]$ and $|A|\geq (1-\xi)n$, then
$$g_r(A)\le\binom{r}{2} 2^{|A|}+2^{-\frac{n}{36\log_2n}}2^n.$$
\end{theorem}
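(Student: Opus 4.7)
The strategy is to split $g_r(A)$ according to how many colors are used. Colorings using at most two of the $r$ colors account for $\binom{r}{2}2^{|A|}-r^2+2r\le\binom{r}{2}2^{|A|}$ by \eqref{eq:1}, so the entire task reduces to bounding, by at most $2^{-n/(36\log_2 n)}2^n$, the number of rainbow 3-AP-free $r$-colorings that use three or more colors. I would accomplish this via a stability result, a constraint argument on the two-coloring of the dominant part, and a container-style enumeration of the exceptional part.

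\textbf{Stability and constraints.} Fix a rainbow 3-AP-free coloring $c\colon A\to[r]$ with color classes $C_1,\ldots,C_r$ ordered so that $|C_1|\ge|C_2|\ge\cdots\ge|C_r|$, and fix a threshold $\delta=\delta(r)$ of order $1/((r-2)^2\log_2 r)$. I claim $|C_3|\le\delta n$: if $|C_1|,|C_2|,|C_3|\ge\delta n$, then a three-partite counting lemma for 3-APs (via a direct Fourier estimate on $\mathbb{Z}/N\mathbb{Z}$, or the triangle-removal lemma applied to a tripartite Cayley graph) produces $\Omega(\delta^{O(1)}n^2)$ 3-APs $(x,y,z)$ with $x\in C_1,y\in C_2,z\in C_3$; since $A$ misses only $\xi n$ elements of $[n]$, at most $O(\xi n^2)$ such 3-APs escape $A$, and the hypothesis $\xi\le 3/(5+8\log_2 r)$ keeps this below the total, so some such 3-AP sits entirely inside $A$ and is rainbow---a contradiction. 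Hence the exceptional set $S:=C_3\cup\cdots\cup C_r=A\setminus(C_1\cup C_2)$ satisfies $|S|\le(r-2)\delta n$. Moreover, for every $z\in S$ and every 3-AP $(x,y,z)$ in $A$ with $x,y\in A\setminus S$, the rainbow 3-AP-free condition forces $c(x)=c(y)$; letting $G_S$ denote the resulting graph of forced equalities on $A\setminus S$, any valid 2-coloring of $A\setminus S$ is constant on each connected component of $G_S$. A short orbit analysis of the operations $y\mapsto 2z-y,\;y\mapsto 2y-z,\;y\mapsto(y+z)/2$ shows that the components of $G_{\{z\}}\subseteq G_S$ are indexed by the odd values of $|y-z|$, and in particular number at most $n/2$. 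Thus for $|S|\ge 1$ the number of valid 2-colorings of $A\setminus S$ is at most $2^{n/2}$.

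\textbf{Container enumeration and the final count.} Apply a quantitative hypergraph container theorem (Balogh--Morris--Samotij, or Saxton--Thomason) to the 3-uniform hypergraph of 3-APs on $[n]$ at density $\delta$: this yields a family $\mathcal F$ of subsets of $[n]$, each of size at most $\delta n$, with $|\mathcal F|\le 2^{C(\delta)n}$ for some $C(\delta)=O(\delta\log_2(1/\delta))\to 0$ as $\delta\to 0$, such that each color class $C_k$ ($k\ge 3$) lies inside some $F_k\in\mathcal F$. Encoding a rainbow 3-AP-free coloring with three or more colors by (i) the pair $\{i,j\}$ of dominant colors ($\binom{r}{2}$ choices), (ii) an $(r-2)$-tuple of containers (at most $|\mathcal F|^{r-2}$ choices), (iii) the sub-coloring of $S$ with the $r-2$ remaining colors (at most $(r-2)^{(r-2)\delta n}$ choices), and (iv) a valid 2-coloring of $A\setminus S$ (at most $2^{n/2}$ choices by the previous step), the total is bounded by
\[
\binom{r}{2}\cdot 2^{(r-2)C(\delta)n}\cdot(r-2)^{(r-2)\delta n}\cdot 2^{n/2}.
\]
With $\delta$ small enough that $(r-2)C(\delta)+(r-2)^2\delta\log_2(r-2)\le 1/2-1/(36\log_2 n)$, this expression is at most $2^{n-n/(36\log_2 n)}=2^{-n/(36\log_2 n)}2^n$, as required.

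The main obstacle I anticipate is pinning down the numerical constants so that all three ingredients fit under the target error rate. On the stability side, the three-partite Roth count at density $\Theta(1/((r-2)^2\log_2 r))$ is very small but still within the classical Roth regime; the hypothesis $\xi\le 3/(5+8\log_2 r)$ is precisely what is needed for the cross-class 3-AP inside $A$ to survive the $\xi n$ deletions. On the counting side, the factor $2^{n/2}$ from the orbit analysis must comfortably outperform both the container overhead $2^{(r-2)C(\delta)n}$ and the sub-coloring factor $(r-2)^{(r-2)\delta n}$, which is what forces $\delta$ to be of order $1/((r-2)^2\log_2 r)$ and pins down all the specific constants appearing in the theorem.
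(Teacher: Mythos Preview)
Your outline has the right skeleton---stability, then a constrained count of the 2-coloring on the dominant part---but it hits a quantitative wall that a constant threshold $\delta$ cannot clear, and it misapplies the container method. First, the container step: the colour classes $C_k$ are not independent sets in the 3-AP hypergraph on $[n]$ (they may, and typically do, contain monochromatic 3-APs), so the container theorem for that hypergraph says nothing about them. A family $\mathcal F$ of size $2^{C(\delta)n}$ with $C(\delta)=O(\delta\log(1/\delta))$ containing every set of size $\le\delta n$ is not a container family at all; it is just the crude count $\binom{n}{\le\delta n}\sim 2^{H(\delta)n}$. Second, your $2^{n/2}$ bound on 2-colourings of $A\setminus S$ comes from the component structure of $G_{\{z\}}$ on all of $[n]\setminus\{z\}$; once you delete the $\bigl(\xi+(r-2)\delta\bigr)n$ vertices in $([n]\setminus A)\cup(S\setminus\{z\})$, each deletion can raise the component count by up to $2$ (the maximum degree in $G_{\{z\}}$ is $3$), and for $\xi$ near its allowed maximum (about $0.17$ when $r=3$) the bound $n/2$ no longer holds. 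Third, and most seriously, the stability claim at any constant $\delta$ small enough to make the entropy term $(r-2)H(\delta)$ fit under $1/2$ is not justified: a direct Fourier estimate does not by itself produce $\Omega(\delta^{O(1)}n^2)$ tripartite 3-APs (one needs a density-increment argument), the removal lemma gives only tower-type dependence on $\delta$, and neither is strong enough to beat the $\Theta(\xi n^2)$ loss from $[n]\setminus A$ with the specific constant $\xi\le 3/(5+8\log_2 r)$. The only explicit threshold in the literature (Axenovich--Fon-Der-Flaass) is $\delta>1/6$, already too large for your enumeration even at $r=3$.

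The paper sidesteps all of this by applying containers not to the 3-AP hypergraph on $[n]$ but to the 3-uniform hypergraph on $[n]\times[r]$ whose edges are rainbow 3-APs; its independent sets are precisely the rainbow 3-AP-free $r$-templates, so one obtains a family $\mathcal C$ of templates with $\log_2|\mathcal C|=O(n^{2/3}\log_2^2 n)$ covering every rainbow 3-AP-free colouring (Theorem~\ref{th:3-APCT}). Stability and counting are then done at the template level with the vanishing threshold $\delta=1/(34\log_2 n)$: any container $P$ with $g(P,A)>2^{(1-\delta)n}$ must have all but $O(\delta n)$ palettes equal to a fixed pair $\{i,j\}$ (Lemmas~\ref{lemma:x3} and~\ref{lemma:stability}), and a modest constraint saving of $n/15$ from a single exceptional element (Lemma~\ref{lemma:counting}) already beats the $o(n)$ entropy of the exceptional set. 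The tension between stability and enumeration that breaks your argument disappears once $\delta=o(1)$, and this is exactly what the template-level container theorem buys.
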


In view of inequality (\ref{eq:1}), our upper bound given by Theorem \ref{th:3-AP} is asymptotically tight for $A=[n]$, and the description of the typical structure follows immediately.

\noindent\begin{corollary}\label{co:3-AP} For every integer $r\geq 3$, we have $g_r([n])= \left(\binom{r}{2}+o(1)\right) 2^{n}.$
Moreover, all but a $o(1)$ proportion of rainbow 3-AP-free $r$-colorings of $[n]$ are 2-colorings.
\end{corollary}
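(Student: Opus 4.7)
The plan is to deduce both assertions directly from Theorem~\ref{th:3-AP} applied to $A = [n]$, sandwiched against the trivial lower bound \eqref{eq:1}. Since $r \geq 3$, the interval for $\xi$ in Theorem~\ref{th:3-AP} is nonempty; fixing any admissible $\xi$ and taking $A = [n]$ makes the hypothesis $|A| \geq (1-\xi)n$ vacuous, so Theorem~\ref{th:3-AP} yields
$$g_r([n]) \leq \binom{r}{2} 2^n + 2^{-n/(36\log_2 n)} 2^n.$$
Meanwhile, \eqref{eq:1} gives $g_r([n]) \geq \binom{r}{2} 2^n - r^2 + 2r$. Since $r$ is a fixed constant and $2^{-n/(36\log_2 n)} = o(1)$, the two bounds combine to yield $g_r([n]) = \bigl(\binom{r}{2} + o(1)\bigr) 2^n$, which is the first claim.

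For the typical-structure claim, I would begin with the observation that any $r$-coloring of $[n]$ whose image has cardinality at most $2$ is automatically rainbow 3-AP-free, because a rainbow 3-AP uses three distinct colors. The number of such ``2-colorings'' is exactly $\binom{r}{2}(2^n - 2) + r$, which is precisely the quantity appearing in \eqref{eq:1}. Subtracting this from the upper bound of Theorem~\ref{th:3-AP} shows that the number of rainbow 3-AP-free $r$-colorings of $[n]$ that use three or more colors is at most
$$2^{-n/(36\log_2 n)} 2^n + O(1) = o(2^n).$$
Dividing by $g_r([n]) = \Theta(2^n)$, this proportion tends to $0$, so almost all rainbow 3-AP-free $r$-colorings of $[n]$ use at most two colors.

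There is essentially no obstacle here: the corollary is a short bookkeeping consequence of the upper bound in Theorem~\ref{th:3-AP} together with the two-color construction behind \eqref{eq:1}, and all the genuine content lies in establishing Theorem~\ref{th:3-AP} itself via the container-type argument deferred to Section~\ref{sec:proof}.
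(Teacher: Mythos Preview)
Your proposal is correct and matches the paper's approach exactly: the paper simply remarks that, in view of inequality~\eqref{eq:1}, the upper bound of Theorem~\ref{th:3-AP} is asymptotically tight for $A=[n]$ and that the typical-structure statement follows immediately. Your write-up spells out precisely this two-line deduction, with the minor cosmetic point that the hypothesis $|A|\ge(1-\xi)n$ is not so much ``vacuous'' as trivially satisfied when $A=[n]$.
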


Next, we consider the following natural question: among all subsets of $[n]$, which one admits the maximum number of rainbow 3-AP-free $r$-colorings? Using Szemer\'{e}di's theorem and Theorem \ref{th:3-AP}, we will answer this question by proving the following result in Section~\ref{sec:proof}.

\noindent\begin{theorem}\label{th:3-APsubset} For all integers $r\geq 3$, there exists $n_1\in\mathbb{N}$ such that for all $n\geq n_1$ and any proper subset $A\subset [n]$, we have $g_r(A)<g_r([n])$.
\end{theorem}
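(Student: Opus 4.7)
The plan is to combine the lower bound~(\ref{eq:1}), which gives $g_r([n])\ge\binom{r}{2}2^n-r^2+2r$, with a case split on $|A|$, applying Theorem~\ref{th:3-AP} on the dense side and Szemer\'edi's theorem (Theorem~\ref{th:Sze}) on the sparse side. Throughout I fix $\xi=3/(5+8\log_2 r)$, the maximum value permitted by Theorem~\ref{th:3-AP}, so that it suffices to upper-bound $g_r(A)$ by something strictly below $\binom{r}{2}2^n-r^2+2r$ for every $A\subsetneq[n]$ with $n$ large.

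In the dense case $|A|\ge(1-\xi)n$, since $A\neq[n]$ forces $|A|\le n-1$, Theorem~\ref{th:3-AP} yields
\[ g_r(A)\;\le\;\binom{r}{2}2^{n-1}+2^{-n/(36\log_2 n)}2^n\;=\;\tfrac{1}{2}\binom{r}{2}2^n+o(2^n), \]
which for $n$ sufficiently large is strictly below the lower bound on $g_r([n])$. In the sparse case $|A|<(1-\xi)n$, I would first dispose of the very sparse range using the trivial bound $g_r(A)\le r^{|A|}$: whenever $|A|\le(n+\log_2\binom{r}{2})/\log_2 r$ we have $r^{|A|}\le\binom{r}{2}2^n$ and the claim follows directly. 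The remaining middle band is where $|A|$ has positive linear density in $[n]$; here Szemer\'edi's theorem, or more usefully its supersaturation consequence (Varnavides' theorem), yields $\Omega_r(n^2)$ three-term APs in $A$, from which one extracts $\Omega_r(n)$ pairwise-disjoint 3-APs. Each such 3-AP cannot be rainbow in a valid coloring, giving $g_r(A)\le r^{|A|}\bigl(\tfrac{3r-2}{r^2}\bigr)^{\Omega(n)}$, which lies below $g_r([n])$ provided the implicit constant is large enough.

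The main obstacle is verifying that the exponential savings in the middle band suffices throughout: since Varnavides-type constants are notoriously small, ensuring enough pairwise-disjoint 3-APs to beat the remaining $r^{|A|}$ for $|A|$ close to $(1-\xi)n$ requires a careful choice of density thresholds. A useful way to sidestep the worst of this is to exploit that $g_r(A)$ depends only on the intrinsic 3-AP structure of $A$ (not on the ambient interval): whenever $\max(A)\le n^{*}:=\lceil|A|/(1-\xi)\rceil$, I can apply Theorem~\ref{th:3-AP} to $A\subseteq[n^{*}]$ directly, recovering a $\binom{r}{2}2^{|A|}+o(2^n)$ bound that beats $g_r([n])$; the only residual configurations have $\max(A)>|A|/(1-\xi)$, so $A$ is highly non-dense in $[\max(A)]$ and hence in $[n]$, and these fall into the regime where the Szemer\'edi-based matching argument is most effective.
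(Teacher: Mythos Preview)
Your treatment of the dense range and the very sparse range matches the paper. The genuine gap is in the middle band: the disjoint-$3$-AP matching bound is simply too weak, and this is not a matter of constants.

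Concretely, take $r=3$ and $|A|=\alpha n$ with $\alpha$ just below $1-\xi\approx 0.83$. Your bound $g_r(A)\le r^{|A|}\bigl(\tfrac{3r-2}{r^2}\bigr)^{m}$ needs
\[
m\;>\;\frac{(\alpha\log_2 r-1)\,n}{\log_2\!\bigl(r^2/(3r-2)\bigr)}\;\approx\;\frac{0.32}{0.36}\,n\;\approx\;0.87\,n
\]
pairwise-disjoint $3$-APs to reach $2^n$. But any matching of $3$-APs in $A$ has size at most $|A|/3<0.28\,n$, so no Varnavides constant can save you; the approach fails by a factor of three even in the best case. Your workaround does not close this: the condition $\max(A)>|A|/(1-\xi)$ only says the density of $A$ in $[\max(A)]$ is below $1-\xi$, not that it is small. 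For instance $A=[m]\cup\{n\}$ with $m=(1-\xi-\epsilon)n$, or a random subset of $[n]$ of density $1-\xi-\epsilon$, both land in your residual case with density arbitrarily close to $1-\xi$, and the $3$-AP matching still fails there.

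The paper's missing idea is to use \emph{long} arithmetic progressions. Fix $k=n_0$ (the threshold from Theorem~\ref{th:3-AP}) and apply Szemer\'edi's theorem greedily to extract $\ge(\alpha-\delta)n/k$ pairwise-disjoint $k$-APs in $A$, leaving at most $\delta n$ uncovered elements. A $k$-AP is affinely isomorphic to $[k]$, so each one admits at most $g_r([k])\le 2^{k+2\log_2 r}$ rainbow-$3$-AP-free colorings by Theorem~\ref{th:3-AP} itself. This replaces the crude $r^k$ by essentially $2^k$ on almost all of $A$, giving
\[
g_r(A)\le r^{\delta n}\bigl(2^{k+2\log_2 r}\bigr)^{(\alpha-\delta)n/k}
= 2^{\alpha n + \delta n\log_2 r + O(n/k)} < 2^n
\]
for suitable $\delta=\delta(r)$ and $k$ large. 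In short, you need Szemer\'edi for $k$-APs with $k$ large and then feed Theorem~\ref{th:3-AP} back into the estimate on each block; matching on $3$-APs alone cannot bridge the gap.
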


Thus $[n]$ is the unique subset admitting the maximum number of rainbow 3-AP-free $r$-colorings among all subsets of $[n]$.

Let $\mathbb{Z}_n$ be the cyclic group of order $n$ formed by the set $\{0, 1, \ldots, n-1\}$ with the binary operation addition modulo $n$. A 3-AP in $\mathbb{Z}_n$ is a sequence $a, b, c$ such that $a+c\equiv 2b \pmod{n}$. For any $A\subseteq \mathbb{Z}_n$ and $r\geq 3$, we use $g_r(A, \mathbb{Z}_n)$ to denote the number of rainbow 3-AP-free $r$-colorings of $A$, and we use the shorthand $g_r(\mathbb{Z}_n)$ for $g_r(\mathbb{Z}_n, \mathbb{Z}_n)$. For any $A\subseteq [n]$, if we also view $A$ as a subset of $\mathbb{Z}_n$, then obviously $g_r(A, \mathbb{Z}_n)\leq g_r(A, [n])$. Moreover, the lower bound $g_r(A, \mathbb{Z}_n)\geq \binom{r}{2}2^{|A|}-r^2+2r$ also holds. Hence, we have the following corollary.

\noindent\begin{corollary}\label{co:Zn} For every integer $r\geq 3$, we have
$$\binom{r}{2}2^{n}-r^2+2r\leq g_r(\mathbb{Z}_n)\leq \binom{r}{2} 2^{n}+2^{-\frac{n}{36\log_2n}}2^n,$$
and thus all but a $o(1)$ proportion of rainbow 3-AP-free $r$-colorings of $\mathbb{Z}_n$ are 2-colorings. Moreover, we have $g_r(A, \mathbb{Z}_n)< g_r(\mathbb{Z}_n)$ for any proper subset $A\subset \mathbb{Z}_n$.
\end{corollary}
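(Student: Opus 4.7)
My plan is to deduce all three claims in the corollary directly from the machinery already established. For the sandwich bound on $g_r(\mathbb{Z}_n)$, the lower bound $\binom{r}{2}2^n-r^2+2r$ is noted in the paper: every $r$-coloring of $\mathbb{Z}_n$ using at most two of the $r$ colors is automatically rainbow 3-AP-free, and there are $\binom{r}{2}(2^n-2)+r$ such colorings. For the upper bound I would use the observation that any 3-AP $a, a+d, a+2d$ in $[n]$ satisfies $a+(a+2d)=2(a+d)$ as integers, hence also modulo $n$; so, under the natural identification of $[n]$ with $\mathbb{Z}_n$ (say $n\leftrightarrow 0$), every rainbow 3-AP in the interval sense remains a rainbow 3-AP in the cyclic sense. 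Consequently any rainbow 3-AP-free $r$-coloring of $\mathbb{Z}_n$ is rainbow 3-AP-free as a coloring of $[n]$, giving $g_r(\mathbb{Z}_n)\le g_r([n])$. Applying Theorem~\ref{th:3-AP} with $A=[n]$ and any admissible $\xi$ (e.g.\ $\xi=3/(5+8\log_2 r)$) then yields the stated upper bound. The typical-structure assertion is immediate: the $\binom{r}{2}(2^n-2)+r$ 2-colorings account for a $1-o(1)$ fraction of the at most $\binom{r}{2}2^n+2^{-n/(36\log_2 n)}2^n$ rainbow 3-AP-free colorings, so almost all of them are 2-colorings.

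For the proper-subset statement, the strategy is to combine the trivial comparison $g_r(A,\mathbb{Z}_n)\le g_r(A,[n])$ (also recorded in the paper) with an upper bound on $g_r(A,[n])$ that is strictly smaller than the lower bound $\binom{r}{2}2^n-r^2+2r$ on $g_r(\mathbb{Z}_n)$. I would split into two regimes. When $|A|\ge(1-\xi)n$ for $\xi=3/(5+8\log_2 r)$, Theorem~\ref{th:3-AP} applied to $A$ gives
$$g_r(A,[n])\;\le\;\binom{r}{2}2^{|A|}+2^{-n/(36\log_2 n)}2^n\;\le\;\tfrac{1}{2}\binom{r}{2}2^n+o(2^n),$$
which for all sufficiently large $n$ is strictly less than $\binom{r}{2}2^n-r^2+2r\le g_r(\mathbb{Z}_n)$. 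When $|A|<(1-\xi)n$, I would reuse the strategy underlying the proof of Theorem~\ref{th:3-APsubset}, which combines Szemer\'{e}di's theorem with Theorem~\ref{th:3-AP} to produce an upper bound on $g_r(A,[n])$ of the same order, and which is therefore again strictly below the lower bound on $g_r(\mathbb{Z}_n)$.

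The main obstacle is the intermediate range of $|A|$ where Theorem~\ref{th:3-AP} does not apply directly but a naive bound $r^{|A|}$ is too crude (especially for larger $r$). That gap is already the crux of the proof of Theorem~\ref{th:3-APsubset}, so the present corollary introduces essentially no new difficulty: the only adjustment is that the target is now $g_r(\mathbb{Z}_n)$ rather than $g_r([n])$, but these two quantities differ by only $o(2^n)$, whereas the loss from restricting to a proper subset is a multiplicative factor close to $\tfrac{1}{2}$, so the same quantitative argument suffices.
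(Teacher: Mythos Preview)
Your argument is correct and matches the paper's own reasoning: the paper does not give a separate proof of this corollary but derives it in one line from the two observations $g_r(A,\mathbb{Z}_n)\ge\binom{r}{2}2^{|A|}-r^2+2r$ and $g_r(A,\mathbb{Z}_n)\le g_r(A,[n])$ together with Theorems~\ref{th:3-AP} and~\ref{th:3-APsubset}. Your only unnecessary detour is re-running the case split of Theorem~\ref{th:3-APsubset}; since that proof already establishes $g_r(A,[n])<\binom{r}{2}2^n-r^2+2r$ for every proper $A\subset[n]$, you can simply invoke it and combine with the lower bound on $g_r(\mathbb{Z}_n)$.
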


Given two positive integers $n$ and $k$, the \emph{anti-van der Waerden number} $aw([n], k)$ (resp., $aw(\mathbb{Z}_n, k)$) is the smallest $r$ such that every exact $r$-coloring of $[n]$ (resp., $\mathbb{Z}_n$) contains a rainbow $k$-AP, where an \emph{exact $r$-coloring} is a coloring using all the $r$ colors. For an integer $n$, if $aw(\mathbb{Z}_n, 3)=3$, then all the rainbow 3-AP-free $r$-colorings of $\mathbb{Z}_n$ are 2-colorings, so $g_r(\mathbb{Z}_n)=\binom{r}{2}2^n-r^2+2r$. In \cite{JLMNR}, Jungi\'{c} et al. gave a characterization of integers $n$ such that $aw(\mathbb{Z}_n, 3)=3$. Moreover, Butler et al. \cite{BEHH} proved that $3\leq aw(\mathbb{Z}_p, 3)\leq 4$ for any prime $p$. For any positive integer $n$, let $\mathbb{Z}_n^\times$ be the multiplicative group of integers modulo $n$, and let ${\rm ord}_n(2)$ be the order of 2 in $\mathbb{Z}_n^\times$. Using their results, we can give the exact value of $g_r(\mathbb{Z}_p)$ for any odd prime.

\noindent\begin{theorem}\label{th:exactZp} For any prime $p\geq 3$ and integer $r\geq 3$, we have $$g_r(\mathbb{Z}_p)= \binom{r}{2}2^p-r^2+2r+ r\binom{r-1}{2}p\Big(2^{\frac{p-1}{c \cdot {\rm ord}_p(2)}}-2\Big),$$ where $c=1$ if ${\rm ord}_p(2)$ is even, and $c=2$ otherwise.
\end{theorem}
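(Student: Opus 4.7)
The plan is to partition the rainbow 3-AP-free $r$-colorings of $\mathbb{Z}_p$ by the number of colors actually used and sum the counts. By~\cite{BEHH}, $aw(\mathbb{Z}_p,3)\le 4$, so every exact $4$-coloring of $\mathbb{Z}_p$ admits a rainbow 3-AP. A standard merging argument (if an exact $k$-coloring with $k\ge 4$ had no rainbow 3-AP, merging two color classes would produce an exact $(k-1)$-coloring still without rainbow 3-APs, iterating down to an exact $4$-coloring and contradicting $aw(\mathbb{Z}_p,3)\le 4$) shows that every rainbow 3-AP-free $r$-coloring uses at most $3$ of the $r$ available colors. The monochromatic colorings and exact $2$-colorings contribute $r+\binom{r}{2}(2^p-2)=\binom{r}{2}2^p-r^2+2r$, the first summand of the formula.

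The main task is then to count the surjective exact $3$-colorings of $\mathbb{Z}_p$ with no rainbow 3-AP; write $R$, $G$, $B$ for the three color classes. I plan to establish the structural claim that one of $R,G,B$ is a singleton and that, after translating this singleton to $0$, the other two color classes are non-empty unions of cosets of the multiplicative subgroup $H=\langle 2,-1\rangle \le \mathbb{Z}_p^\times$. The easy half goes as follows: once $R=\{0\}$, any 3-AP contained in $\mathbb{Z}_p^\times$ uses only the colors of $G,B$ and is automatically not rainbow, so the only potentially rainbow 3-APs are those containing $0$, namely $\{-d,0,d\}$ and $\{0,d,2d\}$ with $d\in\mathbb{Z}_p^\times$. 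Avoiding rainbow on these forces $d$ and $-d$, as well as $d$ and $2d$, to share a color for every $d\ne 0$, which is equivalent to $G$ and $B$ each being a union of cosets of $H$. A short case check on whether $-1\in\langle 2\rangle$ shows $|H|={\rm ord}_p(2)$ when ${\rm ord}_p(2)$ is even and $|H|=2\,{\rm ord}_p(2)$ otherwise, so $|H|=c\cdot{\rm ord}_p(2)$ with $c$ as in the theorem.

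Once the structural claim is in place, the count is elementary: $p$ positions for the singleton, $r$ choices for its color, $\binom{r-1}{2}$ choices for the unordered pair of remaining colors, and $2^{(p-1)/|H|}-2$ surjective assignments of the $(p-1)/|H|$ cosets of $H$ to those two colors. Multiplying gives the second summand $r\binom{r-1}{2}p\bigl(2^{(p-1)/(c\cdot{\rm ord}_p(2))}-2\bigr)$. The main obstacle is the singleton statement itself, namely excluding rainbow 3-AP-free exact 3-colorings with $|R|,|G|,|B|\ge 2$. The plan here is to observe that the no-rainbow condition gives the three sumset inclusions $R+G\subseteq 2R\cup 2G$, $R+B\subseteq 2R\cup 2B$, and $G+B\subseteq 2G\cup 2B$ (using that multiplication by $2$ is a bijection on $\mathbb{Z}_p$, so $|2X\cup 2Y|=|X|+|Y|$ for disjoint $X,Y$), whence $|X+Y|\le|X|+|Y|$ for each pair. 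Combined with the Cauchy-Davenport lower bound $|X+Y|\ge|X|+|Y|-1$ and Vosper's theorem, one splits into the case in which every pair saturates at $|X|+|Y|-1$ (forcing $R,G,B$ to be arithmetic progressions with a common difference, which after dilation become three cyclic intervals of $\mathbb{Z}_p$ in which one exhibits an explicit rainbow 3-AP) and the case in which some pair achieves $|X+Y|=|X|+|Y|$ (handled by a Freiman-type analysis that reduces to the same interval picture). This case analysis is the technically delicate part of the proof.
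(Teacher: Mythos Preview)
Your decomposition and counting are essentially identical to the paper's: reduce to at most three colors via the merging argument and $aw(\mathbb{Z}_p,3)\le 4$, count the $\le 2$-color contribution as $\binom{r}{2}2^p-r^2+2r$, and for exact $3$-colorings show that after translating the singleton to $0$ the remaining two color classes are unions of cosets of $H=\langle 2,-1\rangle\le\mathbb{Z}_p^\times$, with $|H|=c\cdot{\rm ord}_p(2)$. The paper carries out exactly this structure analysis (their sets $Q_\ell$ are precisely the cosets of $H$), and the final count matches term for term.

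The one substantive difference is that the paper does \emph{not} prove the singleton claim: it quotes it directly from \cite{BEHH} as part of their Lemma on $aw(\mathbb{Z}_p,3)$, which states both that $aw(\mathbb{Z}_p,3)\le 4$ \emph{and} that when $aw(\mathbb{Z}_p,3)=4$ every rainbow 3-AP-free exact $3$-coloring has a color used exactly once. You cite \cite{BEHH} only for the first half and then set out to reprove the second half via Cauchy--Davenport and Vosper. The sumset inclusions $X+Y\subseteq 2X\cup 2Y$ are correct and do give $|X+Y|\le|X|+|Y|$, and the Vosper case $|X+Y|=|X|+|Y|-1$ for all three pairs does reduce (with a little care when some class has size $2$) to three cyclic intervals, where a rainbow 3-AP is easy to exhibit once you rotate the smallest interval into the middle. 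However, the case where some pair satisfies $|X+Y|=|X|+|Y|$ is left as a ``Freiman-type analysis that reduces to the same interval picture,'' and this is not sufficient as written: Vosper's theorem says nothing here, and the relevant inverse theorems in $\mathbb{Z}_p$ (Freiman $3k-4$-type results, or the Hamidoune--R{\o}dseth/Serra--Z\'emor refinements) come with side conditions that need to be checked; there is genuine work to be done to close this case. Since the full statement you need is already in \cite{BEHH}, the cleanest fix---and what the paper does---is simply to cite the singleton claim from \cite{BEHH} rather than redevelop it.
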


If $aw(\mathbb{Z}_p, 3)=3$, then $g_r(\mathbb{Z}_p)= \binom{r}{2}2^p-r^2+2r$, and Theorem~\ref{th:exactZp} is trivial in this case. If $aw(\mathbb{Z}_p, 3)=4$, then Theorem~\ref{th:exactZp} implies that there are exactly $r\binom{r-1}{2}p\Big(2^{\frac{p-1}{c \cdot {\rm ord}_p(2)}}-2\Big)$ exact 3-colorings of $\mathbb{Z}_p$ with no rainbow 3-AP.

The remainder of this paper is organized as follows. In the next section, we state and prove a container theorem for rainbow 3-AP-free colorings. In Section \ref{sec:proof}, we give our proofs of Theorems \ref{th:3-AP} and \ref{th:3-APsubset}. In Section \ref{sec:proofZp}, we prove Theorem \ref{th:exactZp}; there we also present some related results.

\section{Notation and preliminaries}
\label{sec:pre}

We will prove Theorem \ref{th:3-AP} using the Hypergraph Container Method developed by Balogh-Morris-Samotij \cite{BaMS} and Saxton-Thomason \cite{SaTh} independently. We first introduce some additional notation.

Let $\mathcal{H}$ be a $k$-uniform hypergraph. Let $d(\mathcal{H})$ be the \emph{average degree} of $\mathcal{H}$. For a subset $U\subseteq V(\mathcal{H})$, let $\mathcal{H}[U]$ be the subhypergraph of $\mathcal{H}$ induced by $U$, and let $d(U)=|\{e\in E(\mathcal{H})\colon\, U\subseteq e\}|$ be the \emph{co-degree} of $U$. For $2\leq j\leq k$, the \emph{$j$th maximum co-degree} of $\mathcal{H}$ is $\Delta_j(\mathcal{H})=\max\{d(U)\colon\, U\subseteq V(\mathcal{H}), |U|=j\}$. When the underlying hypergraph is clear, we simply write $d(\mathcal{H})$ and $\Delta_j(\mathcal{H})$ as $d$ and $\Delta_j$, respectively. For $0<\tau <1$, the \emph{co-degree function} is defined to be
$$\Delta(\mathcal{H},\tau)=2^{\binom{k}{2}-1}\sum^{k}_{j=2}2^{-\binom{j-1}{2}} \frac{\Delta_j}{d\tau^{j-1}}.$$

We will use the following form of the hypergraph container theorem.

\noindent\begin{theorem}\label{th:HCT} {\normalfont (Hypergraph container theorem, \cite[Corollary~3.6]{SaTh})}
Let $\mathcal{H}$ be a $k$-uniform hypergraph on vertex set $[N]$. Let $0< \varepsilon, \tau< 1/2$. Suppose that $\tau  < 1/(200k!^2k)$ and $\Delta(\mathcal{H}, \tau) \leq  \varepsilon /(12k!)$. Then there exists $c = c(k) \leq  1000k!^3k$ and a collection $\mathcal{C}$ of vertex subsets  such that
\begin{itemize}
  \item[{\rm (i)}] every independent set in $\mathcal{H}$ is a subset of some $C\in \mathcal{C}$;

  \item[{\rm (ii)}] for every $C \in \mathcal{C}$, $|E(\mathcal{H} [C])| \leq  \varepsilon \cdot |E(\mathcal{H})|$;

  \item[{\rm (iii)}] $\log_2 |\mathcal{C}| \leq cN\tau \log_2(1/\varepsilon) \log_2(1/\tau)$.
\end{itemize}
\end{theorem}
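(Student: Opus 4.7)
The plan is to prove the theorem via a deterministic \emph{container algorithm} that, given any independent set $I$ of $\mathcal{H}$, outputs a pair $(S,C)$ with $S\subseteq I\subseteq C\subseteq V(\mathcal{H})$ such that $C$ is completely determined by $S$, the fingerprint $|S|$ is small, and $|E(\mathcal{H}[C])|\le \varepsilon |E(\mathcal{H})|$. Taking $\mathcal{C}=\{C(S): S\subseteq V(\mathcal{H}),\; |S|\le s_{\max}\}$ then yields (i) and (ii) automatically, while a bound $s_{\max}=O(N\tau\log_2(1/\varepsilon))$ gives (iii) via $\log_2|\mathcal{C}|\le s_{\max}\log_2(eN/s_{\max})$.

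First, I would fix a canonical ordering of $V(\mathcal{H})$ (refining the natural order by iteratively placing a maximum-degree vertex of the current residual hypergraph first) and then process vertices in this order. Initialize $S=\emptyset$, a live vertex set $L=V(\mathcal{H})$, and a live hypergraph $\mathcal{H}_0=\mathcal{H}$. At step $i$, examine the next vertex $v_i\in L$: if $v_i\in I$, add $v_i$ to $S$ and remove from $L$ every vertex $u$ that is ``dominated'' by the current fingerprint, meaning that for some $2\le j\le k$ there are at least $\tau^{j-1}\, d/\Delta_j$ many $(j{-}1)$-subsets of $S$ forming, together with $u$ and some $(k{-}j)$-subset of $L$, an edge of $\mathcal{H}$. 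At termination let the output container be $C:=L$. Since every $u$ removed at step $i$ has already been ``witnessed'' as unlikely to extend to an independent set containing $S$, one checks inductively that $I\subseteq C$ and that $C$ is a function of $S$ alone.

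The core of the argument is the key lemma: one iteration either reduces the number of edges of $\mathcal{H}_i$ by a factor bounded below by a quantity controlled by $\Delta(\mathcal{H},\tau)$, or it shows that $|E(\mathcal{H}_i)|$ is already below $\varepsilon|E(\mathcal{H})|$ and the process stops. To prove this, I would partition the edges of $\mathcal{H}_i$ by the value $j\in\{2,\dots,k\}$ equal to $1+|e\cap(S\cup\{v_i\})|$ and bound the contribution of each class by the corresponding summand $2^{-\binom{j-1}{2}}\Delta_j/(d\tau^{j-1})$ appearing in the definition of $\Delta(\mathcal{H},\tau)$; summing over $j$ and using $\Delta(\mathcal{H},\tau)\le\varepsilon/(12k!)$ shows that each added fingerprint vertex deletes a definite fraction of the remaining edges. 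A geometric-series estimate then gives $s_{\max}\le c(k)\, N\tau\log_2(1/\varepsilon)$ with $c(k)\le 1000\,k!^3 k$, which is precisely the bound required for (iii).

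The main obstacle is the co-degree accounting inside the key lemma: the weights $2^{-\binom{j-1}{2}}$ are tuned so that the sum telescopes when one converts ``many partial $(j{-}1)$-subsets extending an edge'' into ``a vertex worth deleting,'' and one must balance the two opposing pressures that $\tau$ controls, namely making the deletion thresholds $\tau^{j-1}d/\Delta_j$ small enough that deletions actually happen each round, yet large enough that few vertices are deleted per fingerprint addition. The hypothesis $\tau<1/(200\,k!^2k)$ is exactly what reconciles these; together with $\Delta(\mathcal{H},\tau)\le\varepsilon/(12k!)$ it drives the geometric decay of $|E(\mathcal{H}_i)|$ past the threshold $\varepsilon|E(\mathcal{H})|$ within $s_{\max}$ rounds. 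Once this technical heart is in place, properties (i)--(iii) follow from bookkeeping.
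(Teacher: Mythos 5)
The first thing to note is that the paper does not prove Theorem~\ref{th:HCT} at all: it is stated with the attribution ``see \cite{BaLi,CJLWZ,SaTh}'' and is invoked as a black box from the literature (Saxton--Thomason's hypergraph container theorem in the quantitative form used by Balogh--Li and Cheng et al.). So there is no in-paper proof to compare your argument against; what you have written is a proof sketch of an external result.

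Taken on its own terms, your sketch follows the right general architecture (a deterministic container algorithm producing a fingerprint $S\subseteq I$ and a container $C\supseteq I$ determined by $S$, with a key lemma showing geometric edge decay governed by $\Delta(\mathcal{H},\tau)$), but it has a genuine gap in the step you wave at with ``one checks inductively that $I\subseteq C$.'' Your deletion rule removes from $L$ \emph{every} vertex $u$ that is dominated by the current fingerprint, and the domination criterion depends only on $\mathcal{H}$, $S$, and $L$ --- not on whether $u\in I$. Nothing prevents a vertex $u\in I$ from having high co-degree into $S\cup L$ (independence of $I$ only forbids full edges inside $I$, not large co-degrees of $I$-vertices with $S$ in $\mathcal{H}$), so your rule can eject vertices of $I$ from $L$, destroying the inclusion $I\subseteq C$. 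In the actual Saxton--Thomason/Balogh--Morris--Samotij algorithm this is handled much more carefully: vertices are processed one at a time in a canonical (max-degree) order of the current residual hypergraph, and at each step the examined vertex $v$ is either added to the fingerprint (if $v\in I$) or deleted (if $v\notin I$); the container is then a function of $S$ because $S$ reveals, for each high-degree vertex encountered, which branch was taken. Bulk deletion of ``dominated'' vertices as you describe it is not how $I\subseteq C$ is secured, and without that the whole scheme collapses. The co-degree/weight bookkeeping and the role of the thresholds $\tau<1/(200k!^2k)$, $\Delta(\mathcal{H},\tau)\le\varepsilon/(12k!)$ that you gesture at are also substantially more delicate than the sketch suggests (in particular the weights $2^{-\binom{j-1}{2}}$ arise from an induction on uniformity via link hypergraphs, not from a single telescoping sum), but the $I\subseteq C$ issue is the one that would stop a referee immediately.
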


A key concept in applying the hypergraph container method to colored structures is the notion of a \emph{template}, which was first introduced in \cite{FaOU}.

\noindent\begin{definition}\label{de:template} {\normalfont (Template, palette, subtemplate, rainbow 3-AP)}

{\rm \begin{itemize}
       \item[(1)] An \emph{$r$-template} of order $n$ is a function $P\colon\, [n] \to  2^{[r]}$, associating to each integer $x$ in $[n]$ a list of colors $P(x) \subseteq  [r]$. We refer to this set $P(x)$ as the \emph{palette} available at $x$.

       \item[(2)] Let $P_1, P_2$ be two $r$-templates of order $n$. We say that $P_1$ is a {\it subtemplate} of $P_2$ (written as $P_1 \subseteq P_2$) if $P_1(x) \subseteq P_2(x)$ for every integer $x\in [n]$.

       \item[(3)] For an $r$-template $P$ of order $n$, we say that $P$ is a \emph{rainbow 3-AP} if there exist three integers $a, b, c$ in $[n]$ with $a+c=2b$ such that $|P(a)|=|P(b)|=|P(c)|=1$, $|P(x)|=0$ for each $x\in [n]\setminus \{a, b, c\}$, and $P(a)$, $P(b)$, $P(c)$ are pairwise distinct.

       \item[(4)] For an $r$-template $P$, we say that $P$ is \emph{rainbow 3-AP-free} if there is no subtemplate that is a rainbow 3-AP.
     \end{itemize}}
\end{definition}

Note that for any $A\subseteq [n]$, an $r$-coloring of $A$ can be viewed as an $r$-template $P$ with $|P(x)|=1$ for each $x\in A$ and $|P(x)|=0$ for each $x\in [n]\setminus A$. For an $r$-template $P$, let $R(P)$ be the number of subtemplates of $P$ that are rainbow 3-APs. We should remark that we only consider nontrivial 3-APs, that is, we do not consider the 3-APs with common difference 0. Let $f(n)$ be the number of 3-APs in $[n]$. Note that there is a bijection between all the 3-APs and ordered pairs $(a, b)\in [n]^2$ with $a<b$ and $a\equiv b \pmod{2}$. Thus $f(n)=\binom{\left\lceil n/2 \right\rceil}{2}+ \binom{\left\lfloor n/2 \right\rfloor}{2}= \left\lfloor\frac{n}{2}\right\rfloor\cdot \left\lfloor\frac{n-1}{2}\right\rfloor$. Throughout this paper, we will assume that $n$ is large enough. Using Theorem~\ref{th:HCT}, we can prove the following container theorem for rainbow 3-AP-free colorings.

\noindent\begin{theorem}\label{th:3-APCT}
For every integer $r\geq 3$, there exists $c = c(r)$ and a collection $\mathcal{C}$ of $r$-templates of order $n$ such that
\begin{itemize}
  \item[{\rm (i)}] every rainbow 3-AP-free $r$-template of order $n$ is a subtemplate of some $P\in \mathcal{C}$;

  \item[{\rm (ii)}] for every $P \in \mathcal{C}$, $R(P) \leq n^{-1/3}f(n)$;

  \item[{\rm (iii)}] $\log_2 |\mathcal{C}| \leq cn^{2/3}(\log_2 n)^2$.
\end{itemize}
\end{theorem}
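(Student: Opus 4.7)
The plan is to encode rainbow 3-AP-free templates as independent sets in a 3-uniform auxiliary hypergraph and then apply Theorem~\ref{th:HCT}. Let $\mathcal{H}$ be the hypergraph on vertex set $V(\mathcal{H})=[n]\times [r]$ (so $N=rn$) with edges $\{(a,\alpha),(b,\beta),(c,\gamma)\}$ for every nontrivial 3-AP $a,b,c$ in $[n]$ and every triple of pairwise distinct colors $\alpha,\beta,\gamma\in[r]$. For any $r$-template $P$ of order $n$, put $I_P=\{(x,\alpha):\alpha\in P(x)\}\subseteq V(\mathcal{H})$; this gives a bijection between templates and subsets of $V(\mathcal{H})$ under which rainbow 3-AP subtemplates of $P$ correspond exactly to edges of $\mathcal{H}$ contained in $I_P$. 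In particular, $P$ is rainbow 3-AP-free iff $I_P$ is independent in $\mathcal{H}$, and more generally $R(P)=|E(\mathcal{H}[I_P])|$.

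I would then compute the parameters needed for Theorem~\ref{th:HCT}. Note $|E(\mathcal{H})|=r(r-1)(r-2)f(n)$ and $d=3|E(\mathcal{H})|/N=3(r-1)(r-2)f(n)/n$, which is linear in $n$ for fixed $r$. For two distinct vertices $(x,\alpha),(y,\beta)$, any edge containing both forces $x\neq y$ and $\alpha\neq\beta$; when $x\neq y$, the third coordinate $z\in[n]$ must solve one of $z=2y-x$, $z=2x-y$, or $z=(x+y)/2$ (at most three choices), with the remaining color admitting at most $r-2$ options. Hence $\Delta_2\leq 3(r-2)$, and trivially $\Delta_3\leq 1$.

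Set $\tau=A\cdot n^{-1/3}$ and $\varepsilon=\frac{1}{r(r-1)(r-2)}\,n^{-1/3}$, where $A=A(r)$ is a sufficiently large constant. A direct computation gives
$$\Delta(\mathcal{H},\tau)=4\!\left(\frac{\Delta_2}{d\tau}+\frac{\Delta_3}{2d\tau^2}\right)=O\!\left(\frac{1}{(r-1)A\,n^{2/3}}\right)+O\!\left(\frac{1}{(r-1)(r-2)A^2\,n^{1/3}}\right),$$
so by choosing $A$ large (in terms of $r$) and $n$ sufficiently large, we arrange $\tau<1/(200\cdot 3!^2\cdot 3)$ and $\Delta(\mathcal{H},\tau)\leq \varepsilon/(12\cdot 3!)$. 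Theorem~\ref{th:HCT} then yields a family $\mathcal{C}'\subseteq 2^{V(\mathcal{H})}$ such that every independent set lies in some $C\in\mathcal{C}'$, $|E(\mathcal{H}[C])|\leq \varepsilon |E(\mathcal{H})|=n^{-1/3}f(n)$, and $\log_2|\mathcal{C}'|\leq c_0 N\tau\log_2(1/\varepsilon)\log_2(1/\tau)=O(n^{2/3}\log_2^2 n)$ with hidden constant depending on $r$.

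Finally, pull the containers back to templates by defining $P_C(x)=\{\alpha:(x,\alpha)\in C\}$ and setting $\mathcal{C}=\{P_C:C\in\mathcal{C}'\}$. Property~(i) holds because any rainbow 3-AP-free template $P$ has $I_P\subseteq C$ for some $C\in\mathcal{C}'$, whence $P\subseteq P_C$. Property~(ii) is immediate from $R(P_C)=|E(\mathcal{H}[C])|\leq n^{-1/3}f(n)$, and (iii) from $|\mathcal{C}|\leq |\mathcal{C}'|$. The only delicate point is the co-degree computation: the $\Delta_3$ term forces $\tau$ to be a constant multiple (in $r$) of $n^{-1/3}$ rather than $n^{-1/3}$ itself, which is absorbed into $c(r)$ in~(iii); no deeper obstacle arises.
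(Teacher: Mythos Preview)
Your proof is correct and follows essentially the same approach as the paper: the same auxiliary hypergraph on $[n]\times[r]$, the same co-degree estimates $\Delta_2\le 3(r-2)$ and $\Delta_3\le 1$, and the same choices $\varepsilon=n^{-1/3}/(r(r-1)(r-2))$ and $\tau$ a constant (in $r$) multiple of $n^{-1/3}$. The paper simply pins down the constant explicitly as $\tau=(48\cdot 3!\cdot r)^{1/2}n^{-1/3}$ and carries out the arithmetic without big-$O$ notation, but the argument is otherwise identical.
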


\begin{proof} Let $\mathcal{H}$ be a 3-uniform hypergraph with vertex set $[n] \times [r]$, whose edges are all triples $\{(a, i), (b, j), (c, \ell)\}$ such that $a, b, c$ form a 3-AP in $[n]$ and $i, j, \ell$ are three pairwise distinct colors in $[r]$. In other words, every hyperedge in $\mathcal{H}$ corresponds to a rainbow 3-AP in $[n]$. Note that every vertex subset of $\mathcal{H}$ corresponds to an $r$-template of order $n$, and every independent set in $\mathcal{H}$ corresponds to a rainbow 3-AP-free $r$-template of order $n$. Hence, it suffices to show that for appropriate $\varepsilon$ and $\tau$, there exists a collection $\mathcal{C}$ of vertex subsets satisfying Theorem~\ref{th:HCT}~(i)-(iii). To achieve this, we need to check that $\Delta(\mathcal{H}, \tau) \leq  \varepsilon /(12\cdot 3!)$.

Since there are exactly $r(r-1)(r-2)$ ways to rainbow color a 3-AP with $r$ colors, the average degree $d$ of $\mathcal{H}$ satisfies
$$d=\frac{3|E(\mathcal{H})|}{|V(\mathcal{H})|}=\frac{3r(r-1)(r-2)f(n)}{nr}\geq \frac{3(r-1)(r-2)n}{5}.$$
Note that we have $\Delta_2=3(r-2)$ and $\Delta_3=1$. Let $\varepsilon=n^{-1/3}/(r(r-1)(r-2))$ and $\tau=(48\cdot 3!\cdot r)^{1/2}n^{-1/3}$. Then
$$\Delta(\mathcal{H}, \tau)= \frac{4\Delta_2}{d\tau}+\frac{2\Delta_3}{d\tau^2}\leq \frac{20}{(r-1)n\tau}+\frac{10}{3(r-1)(r-2)n\tau^2}\leq \frac{11}{3(r-1)(r-2)n\tau^2}\leq \frac{\varepsilon}{12\cdot 3!}.$$
The result follows.
\end{proof}

\noindent\begin{definition}\label{de:goodtemplate} {\normalfont (Good $r$-template)} {\rm For any $A\subseteq [n]$, an $r$-template $P$ of order $n$ is a \emph{good $r$-template of $A$} if it satisfies the following properties:

\begin{itemize}
  \item[(1)] $|P(x)|\geq 1$ for every $x\in A$;

  \item[(2)] $R(P)\leq n^{-1/3}f(n)$.
\end{itemize}}
\end{definition}

For a collection $\mathcal{P}$ of $r$-templates of order $n$ and $A\subseteq [n]$, we use $G(\mathcal{P}, A)$ to denote the set of rainbow 3-AP-free $r$-colorings of $A$ that are subtemplates of some $P\in \mathcal{P}$. Let $g(\mathcal{P}, A)=|G(\mathcal{P}, A)|$. If $\mathcal{P}=\{P\}$, then we simply write $G(\mathcal{P}, A)$ and $g(\mathcal{P}, A)$ as $G(P, A)$ and $g(P, A)$, respectively.

\section{Proofs of Theorems~\ref{th:3-AP} and \ref{th:3-APsubset}}
\label{sec:proof}

We first state and prove some propositions and lemmas.

\noindent\begin{proposition}\label{prop:pairs} Let $I\subseteq A\subseteq [n]$. There are at most $\frac{|I|^2}{4}+\frac{|I|(n-|A|)}{2}$ ordered pairs $(a, b)\in I^2$ with $a<b$ such that $\{a, b\}$ is not contained in any 3-AP of $A$.
\end{proposition}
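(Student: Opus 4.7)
The plan is to split the bad pairs into two classes according to the parity of $a+b$, and to bound each class separately by a direct double-counting argument; no clever structural result is needed.

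First I would observe that, since $a,b\in I\subseteq A$, the pair $\{a,b\}$ is contained in a $3$-AP of $A$ if and only if at least one of the three candidate ``third elements'' lies in $A$, namely $2a-b$ (provided $2a-b\geq 1$), $2b-a$ (provided $2b-a\leq n$), or $(a+b)/2$ (provided $a+b$ is even). In particular, if $a+b$ is even, then $m:=(a+b)/2$ is an integer in $[2,n-1]$ and $a,m,b$ is a $3$-AP with common difference $(b-a)/2\geq 1$; hence any bad pair with $a+b$ even must satisfy $m\in[n]\setminus A$.

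For the even-sum bad pairs I would count by the forced midpoint $m$. For each fixed $m\in[n]\setminus A$, the pairs $(a,b)$ with $a+b=2m$ have the form $\{m-d,m+d\}$ for some $d\geq 1$, and these pairs use pairwise disjoint elements of $I$; hence at most $\lfloor|I|/2\rfloor$ such pairs arise from each $m$. Summing over the at most $n-|A|$ admissible values of $m$ gives at most $\frac{|I|(n-|A|)}{2}$ even-sum bad pairs.

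For the odd-sum pairs I would drop the badness condition entirely. Writing $I_o$ and $I_e$ for the odd and even elements of $I$, the number of pairs $(a,b)\in I^2$ with $a<b$ and $a+b$ odd equals $|I_o|\cdot|I_e|\leq \bigl(|I|/2\bigr)^2=|I|^2/4$ by the AM--GM inequality. Adding the two contributions yields the claimed bound. There is essentially no ``main obstacle'' here; the only point worth noticing is that the odd-sum pairs do not need any $3$-AP information and can be absorbed by the trivial parity bound, which is exactly why this clean estimate is available.
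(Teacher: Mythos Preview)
Your proof is correct and follows essentially the same approach as the paper: split the bad pairs according to whether $a+b$ is odd or even, bound the odd-sum pairs by $|I_o|\cdot|I_e|\le |I|^2/4$, and bound the even-sum bad pairs by counting over the forced midpoint $m\in[n]\setminus A$, obtaining at most $|I|/2$ pairs per midpoint. The only cosmetic difference is that the paper bounds the number of pairs with midpoint $m$ by $\min\{|[m-1]\cap I|,\,|([n]\setminus[m])\cap I|\}\le |I|/2$, whereas you use the equivalent observation that distinct pairs with the same midpoint occupy disjoint two-element subsets of $I$.
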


\begin{proof} For any ordered pair $(a, b)\in I^2$ with $(a+b)/2\in A$, we have that $\{a, (a+b)/2, b\}$ forms a 3-AP of $A$. Thus if $\{a, b\}$ is not contained in any 3-AP of $A$, then either $a$ and $b$ have different parities, or $(a+b)/2\in [n]\setminus A$. Let $\alpha$ be the number of odd integers in $I$. Then the number of ordered pairs $(a, b)\in I^2$ with $a<b$ such that $a$ and $b$ have different parities is at most $\alpha (|I|-\alpha)\leq |I|^2/4$. For any $i\in [n]\setminus A$, the number of ordered pairs $(a, b)\in I^2$ with $a<b$ and $(a+b)/2=i$ is at most $\min \{|[i-1]\cap I|, |([n]\setminus [i])\cap I|\}$. Thus the number of ordered pairs $(a, b)\in I^2$ with $a<b$ and $(a+b)/2\in [n]\setminus A$ is at most $|I|(n-|A|)/2$. The result follows.
\end{proof}

\noindent\begin{proposition}\label{prop:bipairs} For sufficiently large $n$, let $I\subseteq A\subseteq [n]$ with $|I|\geq \frac{73}{74}n$, and let $I_1$, $I_2$ be a bipartition of $I$ with $|I_1|\leq |I_2|$. There are at least $\frac{|I_1||I_2|}{9}-3|I_1|(n-|A|)$ pairs $(a, b)$ with $a\in I_1$ and $b\in I_2$ such that $\{a, b\}$ is contained in some 3-AP of $A$.
\end{proposition}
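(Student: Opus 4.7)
The plan is to split by parity. Let $\alpha_i,\beta_i$ denote the numbers of odd and even elements of $I_i$. For each ordered pair $(a,b) \in I_1 \times I_2$, the pair $\{a,b\}$ is contained in some 3-AP of $A$ iff at least one of the three ``partners'' $2a-b$, $(a+b)/2$ (defined only when $a \equiv b \pmod 2$), $2b-a$ lies in $A$. The basic charging tool, used repeatedly, is that for each $c \in [n] \setminus A$ and each partner type, at most $|I_1|$ pairs $(a,b) \in I_1 \times I_2$ have $c$ as that partner (since $a \in I_1$ together with $c$ determines $b$).

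First I would handle the \emph{parity-balanced case} $\alpha_1\alpha_2 + \beta_1\beta_2 \ge |I_1||I_2|/9$. Here at least $|I_1||I_2|/9$ same-parity pairs exist, each of which is good unless its midpoint lies in $[n]\setminus A$; by charging, at most $|I_1|(n-|A|)$ such bad midpoints occur, so the number of good pairs is already at least $|I_1||I_2|/9 - |I_1|(n-|A|)$, which is stronger than claimed.

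Otherwise $\alpha_1\beta_2 + \beta_1\alpha_2 > 8|I_1||I_2|/9$, and without loss of generality $\alpha_1\beta_2 \ge 4|I_1||I_2|/9$. Setting $J_1 = I_1 \cap \{\mathrm{odd}\}$ and $J_2 = I_2 \cap \{\mathrm{even}\}$ gives $|J_1||J_2| \ge 4|I_1||I_2|/9$. Any pair $(a,b) \in J_1 \times J_2$ has different parity, so it is good iff $2a - b \in A$ or $2b - a \in A$. By the charging tool applied to both types, at most $2|I_1|(n-|A|)$ such pairs are bad because of an element of $[n]\setminus A$. The remaining bad pairs have both $2a-b$ and $2b-a$ outside $[n]$ (``no-partner'' pairs); an elementary computation shows that for $a < b$ this forces $b \ge 2a$ and $b > (a+n)/2$, so $(a,b)$ lies in a triangular region of area about $n^2/6$. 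Together with $|I| \ge 73n/74$ and $|I_1| \le |I_2|$, this should yield that the number of no-partner pairs in $J_1 \times J_2$ is at most $|I_1||I_2|/3$, giving at least $4|I_1||I_2|/9 - |I_1||I_2|/3 - 2|I_1|(n-|A|) = |I_1||I_2|/9 - 2|I_1|(n-|A|)$ good pairs.

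The main obstacle will be the no-partner bound in the unbalanced case. The density $|I| \ge 73n/74$ forces $I_1 \cup I_2$ to cover $[n]$ up to $O(n/74)$ exceptions, constraining the geometry of $J_1$ and $J_2$, and a case analysis across the three thirds $[1,n/3]$, $(n/3, 2n/3]$, $(2n/3, n]$ of $[n]$ together with $|I_1| \le |I_2|$ should suffice to prove the required bound, absorbing the small $O(|I_1|(n-|A|))$ slack into the $3|I_1|(n-|A|)$ error term of the proposition.
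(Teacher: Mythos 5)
Your approach is genuinely different from the paper's: you split into a ``parity‑balanced'' and a ``parity‑imbalanced'' case, whereas the paper works with all of $I_1\times I_2$ at once, defining for each $a\in I_1$ the quantities $\alpha(a)$ and $\beta(a)$ (the number of $b>a$, resp.\ $b<a$, with $\{a,b\}$ not in any $3$-AP of $[n]$) and bounding $\sum_{a\in I_1}(\alpha(a)+\beta(a))$ by concentrating on the $|I_1|\le n/2$ extreme values of these functions. Your Case~1 (same-parity pairs dominate) is correct as stated.

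There is, however, a genuine gap in Case~2. The claimed bound ``no-partner pairs in $J_1\times J_2$ is at most $|I_1||I_2|/3$'' is \emph{false}. Take $I_2$ to be all even numbers in $[n]$ and $I_1$ to be all odd numbers except roughly $n/74$ of those lying near $n/2$ (so $|I|\approx 73n/74$ and $|I_1|\le|I_2|$). Then $J_1=I_1$, $J_2=I_2$, $|I_1||I_2|\approx 9n^2/37$, so $|I_1||I_2|/3\approx 3n^2/37\approx 0.0811\,n^2$. But the no-partner set restricted to (odd)$\times$(even) has size $\approx n^2/12\approx 0.0833\,n^2$, and removing odds near $n/2$ removes only $O(n^2/74^2)$ no-partner pairs (since those elements participate in $O(n/74)$ no-partner pairs each), so the no-partner count stays $\approx n^2/12$, exceeding $|I_1||I_2|/3$. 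The deeper problem is that you invoke two bounds --- $|J_1||J_2|\ge 4|I_1||I_2|/9$ and ``no-partner $\le |I_1||I_2|/3$'' --- whose tight cases are disjoint: when the no-partner count approaches $n^2/12$ you have $J_1\times J_2$ nearly all of $I_1\times I_2$, so the lower bound on $|J_1||J_2|$ is very slack. A correct argument must bound the \emph{difference} $|J_1||J_2|-\text{no-partner}$ jointly; that is exactly what the paper's $\alpha,\beta$-summation with the $|I_1|\le n/2$ rearrangement argument accomplishes, together with the explicit use of $|I|\ge\frac{73}{74}n$ in the final numerical estimate. (Note that in the example above the Proposition's conclusion is still comfortably true --- it is your intermediate inequality that fails, not the statement.)

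Separately, you also wave at the no-partner estimate with ``should yield'' and defer a ``case analysis across the three thirds''; even if the target bound were weakened to what you actually need (namely $\text{no-partner}\le|I_1||I_2|/3+|I_1|(n-|A|)$, which the example above just barely satisfies), this part would still have to be carried out explicitly, and the delicate tightness at $|I_1|=|I_2|=n/2$ means the constant $\frac{73}{74}$ would have to enter the computation rather than be absorbed informally.
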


\begin{proof} Since $|I_1|\leq |I_2|$, we have $|I_1|\leq |I|/2\leq n/2$ and $|I_2|\geq |I|/2\geq 73n/148$. We first consider the pairs $(a, b)$ with $a\in I_1$ and $b\in I_2$ such that $\{a, b\}$ is contained in some 3-AP of $[n]$. For any pair $(i, j)$ with $1\leq i< j\leq n$, if $\{i, j\}$ is not contained in any 3-AP of $[n]$, then $2i-j\leq 0$, $2j-i\geq n+1$, and $i, j$ have different parities. So $i\leq j/2\leq n/2$, $j\geq 2i$ and $j\geq (n+1+i)/2$.

For each $i\in [n]$, the number of integers $j\in [n]$ with $j>i$ such that $\{i, j\}$ is not contained in any 3-AP of $[n]$ is at most $\left\lceil\frac{1}{2}\left(n- \max\left\{\min \{2i-1, n\}, \frac{n+1+i}{2}-1\right\}\right)\right\rceil\leq \alpha(i)$, where $\alpha(i):=\frac{1}{2}\left(n- \max\left\{2i-1, \frac{n-1+i}{2}\right\}+1\right)$ if $1\leq i\leq \lfloor n/2\rfloor$, and $\alpha(i):=0$ if $\lfloor n/2\rfloor+1\leq i\leq n$. Note that $\alpha(1)\geq \alpha(2)\geq \cdots \geq \alpha(\lfloor n/2\rfloor)\geq \alpha(\lfloor n/2\rfloor+1)= \cdots = \alpha(n)=0$. Similarly, for each $j\in [n]$, the number of integers $i\in [n]$ with $i<j$ such that $\{i, j\}$ is not contained in any 3-AP of $[n]$ is at most $\beta (j):= \alpha (n+1-j)$. Note that $\beta(n)\geq \beta(n-1)\geq \cdots \geq \beta(\lceil n/2\rceil+1)\geq \beta(\lceil n/2\rceil)= \cdots = \beta(1)=0$.

From the above argument, we have that at least one of $\alpha(i)$ and $\beta(i)$ is zero for every $i\in [n]$. Moreover, the sequence of $\beta$s is exactly the same as the sequence of $\alpha$s but in reverse order. Furthermore, $\lceil |I_1|/2\rceil\leq \lceil |I|/4\rceil\leq \lceil n/4\rceil\leq (n+1)/3$. Thus the number of pairs $(a, b)$ with $a\in I_1$ and $b\in I_2$ such that $\{a, b\}$ is contained in some 3-AP of $[n]$ is at least
\begin{align*}
  ~ &~|I_1||I_2|-\sum_{a\in I_1} (\alpha(a)+\beta(a))= ~|I_1||I_2|-\sum_{a\in I_1} \max\{\alpha(a), \beta(a)\}\\
   \geq &~|I_1||I_2|-\big(\alpha(1)+\alpha(2)+\cdots +\alpha(\lceil |I_1|/2\rceil)+\beta(n)+\beta(n-1)+\cdots+\beta(n+1-\lfloor|I_1|/2\rfloor)\big)\\
   \geq &~|I_1||I_2|-2\sum^{\lceil |I_1|/2\rceil}_{i=1}\alpha(i)=~ |I_1||I_2|-2\sum^{\lceil |I_1|/2\rceil}_{i=1}\frac{1}{2}\left(n-\frac{n-1+i}{2}+1\right)\\
   \geq &~|I_1||I_2|-\frac{|I_1|+1}{2}\cdot \frac{1}{2}\cdot \left(\frac{n+2}{2}+\frac{n-|I_1|/2+3}{2}\right)=~|I_1||I_2|-\frac{(|I_1|+1)(2n-|I_1|/2+5)}{8}\\
   \geq &~\frac{|I_1|}{4}\left(4|I_2|-2n+\frac{|I_1|}{2}-5\right)=~ \frac{|I_1|}{4}\left(\frac{4}{9}|I_2|+\frac{37}{18}|I_2|+\frac{3}{2}|I_2|+\frac{|I_1|}{2}-2n-5\right)\\
   \geq &~\frac{|I_1|}{4}\left(\frac{4}{9}|I_2|+\frac{37}{36}|I|+|I|-2n-5\right) \geq~\frac{|I_1|}{4}\left(\frac{4}{9}|I_2|+\frac{73}{36}\cdot \frac{73}{74}n-2n-5\right)\geq~\frac{|I_1||I_2|}{9}.
\end{align*}

Next, we consider the pairs $(a, b)$ with $a\in I_1$ and $b\in I_2$ such that $\{a, b\}$ is contained in some 3-AP of $[n]$ but $\{a, b\}$ is not contained in any 3-AP of $A$, and we use $\gamma$ to denote the number of such pairs. In this case, at least one of the following holds: (i) $2a-b \in [n]\setminus A$, (ii) $(a+b)/2 \in [n]\setminus A$, (iii) $2b-a\in [n]\setminus A$. Thus $\gamma \leq 3|I_1|(n-|A|)$. The result follows.
\end{proof}

\noindent\begin{lemma}\label{lemma:x3} For sufficiently large $n$, let $r\geq 3$, $\delta=\frac{1}{34\log_2 n}$, $0\leq \xi \leq \frac{3}{13+8\log_2 r}$ and $A\subseteq [n]$ with $|A|= (1-\xi)n$. If there exists a good $r$-template $P$ of $A$ with $g(P, A)>2^{(1-\delta)n}$, then $\xi <(\log_2r-1)n^{-1/3}\log_2n+\delta$ and the number of integers $x\in A$ with $|P(x)|\geq 3$ is at most $n^{2/3}\log_2 n$.
\end{lemma}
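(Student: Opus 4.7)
The plan is to combine a product upper bound on $g(P,A)$ with a lower bound on the number $R(P)$ of rainbow 3-AP subtemplates of $P$, obtained from Proposition~\ref{prop:pairs}.

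Let $I_3=\{x\in A:|P(x)|\geq 3\}$ and $m=|I_3|$. Every coloring counted by $g(P,A)$ picks $c(x)\in P(x)$ at each $x\in A$, so $g(P,A)\leq\prod_{x\in A}|P(x)|\leq r^m\cdot 2^{|A|-m}$. Together with $g(P,A)>2^{(1-\delta)n}$ and $|A|=(1-\xi)n$, taking $\log_2$ gives $m(\log_2r-1)>(\xi-\delta)n$, so (assuming $\xi>\delta$; otherwise conclusion~(a) is immediate) $m>(\xi-\delta)n/(\log_2r-1)$. Observe that once conclusion~(b), namely $m\leq n^{2/3}\log_2 n$, is established, this inequality delivers $\xi-\delta<(\log_2r-1)n^{-1/3}\log_2n$, i.e.\ conclusion~(a). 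Thus it suffices to prove~(b).

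To control $m$ from above I would lower-bound $R(P)$ in terms of $m$. A short case check on palette sizes shows that any 3-AP $(a,b,c)$ of $A$ with at least two positions in $I_3$ supports at least $2$ rainbow 3-AP subtemplates on $\{a,b,c\}$, and at least $6$ when all three lie in $I_3$. Counting pair--3-AP incidences (a 3-AP with exactly $2$ positions in $I_3$ contributes one such pair, with $3$ positions in $I_3$ contributes three) gives $R(P)\geq 2P_\chi$, where $P_\chi$ denotes the number of pairs $\{a,b\}\subseteq I_3$ that lie in some 3-AP of $A$. Applying Proposition~\ref{prop:pairs} with $I=I_3$ bounds the pairs \emph{not} contained in any 3-AP of $A$ by $m^2/4+m\xi n/2$, so $P_\chi\geq m(m-2-2\xi n)/4$. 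Since $P$ is good, $R(P)\leq n^{-1/3}f(n)\leq n^{5/3}/4$, and combining yields the key quadratic inequality $m(m-2-2\xi n)\leq n^{5/3}/2$.

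The final step is a contradiction argument. Assuming $m>n^{2/3}\log_2n$, the key inequality forces $m-2-2\xi n<n/(2\log_2n)$, whence $m<2\xi n+2+n/(2\log_2n)$; the hypothesis $\xi\leq 3/(5+8\log_2r)$ is tailored so that this upper bound together with the product lower bound on $m$ yields a contradiction, forcing~(b). The main obstacle is the arithmetic of this last step: a naive combination of the two bounds produces only $\xi(3-2\log_2r)<\delta+o(1)$, which is vacuous for $r\geq 3$ since $3-2\log_2r\leq 0$. Closing the argument will therefore require either a sharper lower bound on $R(P)$ -- for instance, using that a 3-AP with one position in $I_3$ and the other two in $I_{\geq 2}=\{x\in A:|P(x)|\geq 2\}$ also contributes at least $2$ rainbow triples, so that positions with $|P(x)|=1$ enter the count through the term $S_1$ in the product bound -- or an application of Proposition~\ref{prop:bipairs} to a carefully chosen bipartition that exploits the specific constants $5$ and $8$ in the hypothesis. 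Once (b) is secured, (a) follows immediately from the product bound.
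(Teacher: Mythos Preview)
Your proposal has a genuine gap, and you correctly locate it: the contradiction in the final step does not close. The paper's proof supplies two ingredients that your outline is missing.

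First, the product bound should be sharpened by separating out the positions with palette size exactly~$1$. Writing $x_1,x_2,x_3$ for the number of $x\in A$ with $|P(x)|=1,=2,\geq 3$ respectively, one has $g(P,A)\leq 1^{x_1}\cdot 2^{x_2}\cdot r^{x_3}$, which after taking logarithms and using $x_1+x_2+x_3=(1-\xi)n$ gives
\[
x_3>\frac{(\xi-\delta)n+x_1}{\log_2 r-1}.
\]
Your bound $r^{m}2^{|A|-m}$ throws away the $+x_1$ on the right, and this is precisely what makes the arithmetic fail later.

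Second, the argument proceeds in two stages rather than one. Stage~1 shows that $x_2\geq n/4+\xi n+n/\log_2 n+1$: if not, then $x_1+x_3$ exceeds roughly $3n/4-2\xi n$, and feeding this into the sharpened inequality above (which now involves $x_1+x_3$ on the right after adding $x_3$ to both sides) forces $x_3>c(r)\,n$ for a positive constant depending only on $r$; here the hypothesis $\xi\leq 3/(5+8\log_2 r)$ is exactly calibrated so that Proposition~\ref{prop:pairs} applied to $I=X_3$ then yields $R(P)=\Theta(n^2)$, contradicting goodness. Stage~2, with $x_2$ now known to be large, counts for each $a\in X_3$ the elements $b\in X_2$ such that $\{a,b\}$ lies in some $3$-AP of $A$; since $x_2>n/4+\xi n$, there are at least $n/\log_2 n$ such $b$, whence $R(P)\geq x_3 n/(2\log_2 n)$ and so $x_3\leq n^{2/3}\log_2 n$.

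So your closing instinct---bring in pairs with one endpoint in $X_3$ and the other where the palette has size $\geq 2$---is right, but it only works after a preliminary step guaranteeing that $X_2$ occupies a positive fraction of $[n]$, and that step in turn needs the refined product bound with the $x_1$ term. Proposition~\ref{prop:bipairs} is not used in this lemma at all; it enters only in the next one.
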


\begin{proof} Let $X_1=\{x\in A\colon\, |P(x)|=1\}$, $X_2=\{x\in A\colon\, |P(x)|=2\}$ and $X_3=\{x\in A\colon\, |P(x)|\geq 3\}$. For $1\leq i\leq 3$, let $x_i=|X_i|$. Since $g(P, A)>2^{(1-\delta)n}$, we have $1^{x_1}\cdot 2^{x_2}\cdot r^{x_3}>2^{(1-\delta)n}$. Since $\sum_{1\leq i\leq 3}x_i = (1-\xi)n$, we have
\begin{equation}\label{eq:2}
x_3\log_2 r>(\xi-\delta)n+x_1+x_3.
\end{equation}

We claim that $x_2\geq n/4+3\xi n+n/\log_2 n+1$. Otherwise, we have $x_1+x_3>3n/4-4\xi n-n/\log_2 n-1$, so $x_3>(3/4-3\xi-o(1))n/\log_2 r=\Theta(n)$ by inequality (\ref{eq:2}). Note that if $(a, b)\in X_3\times X_3$ and $\{a, b, c\}$ forms a 3-AP for some $c\in A$, then $P$ contains a subtemplate $P'$ such that $P'$ is a rainbow 3-AP with $|P'(a)|=|P'(b)|=|P'(c)|=1$. By Proposition \ref{prop:pairs} and our choice of $\xi$, we have
\begin{align*}
  R(P)\geq &~\frac{1}{3}\left(\binom{x_3}{2}-\frac{(x_3)^2}{4}-\frac{x_3\xi n}{2}\right)=~\frac{x_3}{12}\left(x_3-2\xi n-2\right)\\
   > &~\frac{x_3}{12}\left(\frac{n}{\log_2 r}\left(\frac{3}{4}-3\xi-o(1)\right)-2\xi n-2\right)=~\frac{x_3 n}{12\log_2 r}\left(\frac{3}{4}-3\xi-2\xi\log_2 r-o(1)\right)\\
   \geq &~\frac{x_3 n}{12\log_2 r}\left(\frac{3}{4}-\frac{9}{13+8\log_2 r}-\frac{6\log_2 r}{13+8\log_2 r}-o(1)\right)=~\Theta(n^2),
\end{align*}
contradicting the assumption that $P$ is a good $r$-template.

For any $a\in X_3$, let $Y_a=\{b\in X_2\colon\, \{a, b\}\ \mbox{is contained in some 3-AP of } A\}$. For any $b\in X_2\setminus Y_a$, we have $2a-b\notin A$, $2b-a\notin A$ and $(a+b)/2\notin A$. Since $|\{b\in X_2\setminus Y_a\colon\, 2a-b\notin [n], 2b-a\notin [n], (a+b)/2\notin [n]\}|\leq \lceil n/4\rceil$ and $|\{b\in X_2\setminus Y_a\colon\, 2a-b\in [n]\setminus A\}|+|\{b\in X_2\setminus Y_a\colon\, 2b-a\in [n]\setminus A\}|+|\{b\in X_2\setminus Y_a\colon\, (a+b)/2\in [n]\setminus A\}|\leq 3\xi n$, we have $|X_2\setminus Y_a|\leq \lceil n/4\rceil+3\xi n\leq n/4+3\xi n+1$. Thus $|Y_a|\geq x_2-n/4-3\xi n-1\geq n/\log_2 n$. Note that if $a\in X_3$, $b\in X_2$ and $\{a, b, c\}$ forms a 3-AP for some $c\in A$, then $P$ contains a subtemplate $P'$ such that $P'$ is a rainbow 3-AP with $|P'(a)|=|P'(b)|=|P'(c)|=1$. Thus $R(P)\geq x_3n/(2\log_2 n)$. Since $P$ is a good $r$-template, we have $n^{-1/3}f(n)\geq x_3n/(2\log_2 n)$. Thus $x_3\leq n^{2/3}\log_2 n$. Moreover, by inequality (\ref{eq:2}), we have $\xi< (\log_2r-1)n^{-1/3}\log_2 n+\delta$.
\end{proof}

\noindent\begin{lemma}\label{lemma:stability} {\normalfont (Stability)} For sufficiently large $n$, let $r\geq 3$, $\delta=\frac{1}{34\log_2 n}$, $0\leq \xi < (\log_2r-1)n^{-1/3}\log_2n+\delta$ and $A\subseteq [n]$ with $|A|= (1-\xi)n$. Suppose $P$ is a good $r$-template of $A$ with $g(P, A)>2^{(1-\delta)n}$. Then there exist two colors $i, j\in [r]$ such that the number of integers $x\in A$ with $P(x)=\{i,j\}$ is at least $(1-2\delta)n$.
\end{lemma}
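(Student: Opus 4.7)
The plan is to argue by contradiction: assume that $|X_2^{i,j}| < (1-2\delta)n$ for every pair $\{i,j\} \subseteq [r]$, where $X_2^{i,j} = \{x \in A : P(x) = \{i,j\}\}$, and derive a contradiction with the bound $R(P) \leq n^{-1/3}f(n) = O(n^{5/3})$ coming from the goodness of $P$. First, combining $g(P,A) > 2^{(1-\delta)n}$ with the trivial upper bound $g(P,A) \leq 2^{|X_2|}r^{|X_3|}$ and the bound $|X_3| \leq n^{2/3}\log_2 n$ from Lemma \ref{lemma:x3} gives $|X_2| \geq (1-\delta)n - n^{2/3}\log_2 n\,\log_2 r$, so that $|[n]\setminus X_2| \leq \delta n + o(n)$.

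After relabeling, let $\{1,2\}$ maximize $|X_2^{i,j}|$ and set $Y = X_2^{1,2}$, $Z = X_2 \setminus Y$. By assumption $|Y| < (1-2\delta)n$, so $|Z| > (\delta - o(1))n$. The key combinatorial observation is that every 3-AP $\{a,b,z\}$ of $A$ with $a,b \in Y$ and $z \in Z$ contributes at least two rainbow-triple subtemplates to $R(P)$: indeed $P(a) = P(b) = \{1,2\}$ and $P(z) \neq \{1,2\}$, and a short case check on $|P(z) \cap \{1,2\}| \in \{0,1\}$ gives 4 or 2 rainbow triples respectively. The analogous statement holds for 3-APs $\{y,z_1,z_2\}$ with $y \in Y$ and $z_1,z_2 \in Z$, regardless of whether $P(z_1) = P(z_2)$. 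Writing $M_{YYZ}$ and $M_{YZZ}$ for the corresponding counts in $A$, we deduce $R(P) \geq 2(M_{YYZ} + M_{YZZ})$, and are reduced to lower-bounding this quantity.

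I would split into two cases. If $|Y| > n/2 + 5n^{2/3}\log_2 n$ (the ``large-$Y$'' regime), the standard inclusion estimate $|Y \cap (2z - Y) \cap [n]| \geq 2|Y| - n$, applied to each $z \in Z$ lying in the central portion of $[n]$, produces at least $|Y| - n/2$ unordered pairs $\{y_1,y_2\} \subseteq Y$ with $(y_1+y_2)/2 = z$; summing over $z \in Z$ and absorbing boundary corrections yields $M_{YYZ} = \Omega((|Y| - n/2)\,|Z|) \gg n^{5/3}$, contradicting goodness. Otherwise $|Y| \leq n/2 + 5n^{2/3}\log_2 n$ and hence $|Z| \geq n/2 - O(\delta n)$, so I would invoke Proposition \ref{prop:bipairs} with $(I_1,I_2) = (Z,Y)$ and $I = X_2$: it delivers at least $|Z||Y|/9 - 3|Z|\xi n$ ordered pairs in $Z \times Y$ lying in some 3-AP of $A$; each $YYZ$- or $YZZ$-type 3-AP accounts for 2 such pairs, while 3-APs whose third vertex lies in $X_1 \cup X_3$ contribute a total of at most $n\,|X_1 \cup X_3| \leq 2\delta n^2$ pairs. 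Combined with the pigeonhole bound $|Y| \geq (1-\delta)n/\binom{r}{2}$, this yields $R(P) = \Omega(n^2/\binom{r}{2})$, again contradicting goodness.

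The main obstacle is the narrow intermediate regime in which $|Y|$ sits close to $n/2$: the midpoint bound in the first case becomes degenerate, while the error terms $3|Z|\xi n$ and $n\,|X_1 \cup X_3|$ (each of order $\delta n^2$) in the second case threaten to swamp the main term $|Z||Y|/9$. Fortunately, in this regime $|Z|$ is also forced to be close to $n/2$, so $|Z||Y|/9 = \Omega(n^2/\binom{r}{2})$ comfortably dominates the $O(\delta n^2)$ errors as soon as $\log_2 n \gg \binom{r}{2}$; carefully tracking the boundary corrections in the midpoint estimate and verifying that the two cases together exhaust all possibilities under the contradiction hypothesis is the remaining bookkeeping task.
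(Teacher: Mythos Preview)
Your overall plan---bound $|X_2|$ from below via Lemma~\ref{lemma:x3}, pick the dominant palette class $Y=X_2^{1,2}$, and use Proposition~\ref{prop:bipairs} to show that a large complement $Z=X_2\setminus Y$ would force too many rainbow subtemplates---matches the paper's. The difference lies in the rainbow-counting step. You restrict attention to $3$-APs of type $YYZ$ or $YZZ$ (all three vertices in $X_2$), whereas the paper observes more simply that \emph{any} $3$-AP of $A$ containing a pair $(a,b)\in Y\times Z$ already yields a rainbow subtemplate, regardless of where the third vertex $c\in A$ lies: since $P(a)=\{1,2\}$, $P(b)$ contains some colour outside $\{1,2\}$, and $|P(c)|\geq1$, three distinct colours can always be selected. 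This lets the paper pass directly from Proposition~\ref{prop:bipairs} to
\[
R(P)\;\geq\;\tfrac{1}{2}\bigl(|Y||Z|/9-3\xi n\cdot\min\{|Y|,|Z|\}\bigr)
\]
and solve for $|Z|=O(n^{2/3})$ in one stroke---no contradiction hypothesis, no case split, and no $n|X_1\cup X_3|$ error term.

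Your restriction is precisely what forces the split: once you subtract $n|X_1\cup X_3|\approx\delta n^2$, the Proposition~\ref{prop:bipairs} bound becomes useless in the regime $|Z|=\Theta(\delta n)$, so you need a separate argument (your Case~1) for large $|Y|$. But the midpoint estimate you propose there has a genuine gap, not just boundary bookkeeping: the inclusion bound $|Y\cap(2z-Y)|\geq 2|Y|-n$ is only positive for $z$ in the interval $(n-|Y|,|Y|)$, and nothing prevents $Z$ from lying entirely outside that interval (take $Y$ to be a long initial segment of $[n]$ and $Z$ to sit at the top). Hence the asserted $M_{YYZ}=\Omega\bigl((|Y|-n/2)\,|Z|\bigr)$ does not follow from midpoints alone. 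One could repair Case~1 by also counting $3$-APs in which $z$ is an endpoint rather than the midpoint, but the cleaner fix is to drop the $YYZ/YZZ$ restriction altogether and argue as the paper does.
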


\begin{proof} Let $X_i$ and $x_i$ ($1\leq i\leq 3$) be the sets and set sizes as given in the proof of Lemma \ref{lemma:x3}. By Lemma \ref{lemma:x3} and inequality (\ref{eq:2}), we have $x_3\leq n^{2/3}\log_2 n$ and $x_1+x_3<x_3\log_2 r-(\xi-\delta)n$. Thus $x_2> (1-\delta)n-x_3\log_2r\geq (1-\delta- n^{-1/3}\log_2 n\log_2 r)n$. For $1\leq i< j\leq r$, let $Y_{i,j}=\{a\in X_2\colon\, P(a)=\{i,j\}\}$. Without loss of generality, let $|Y_{1,2}|\geq x_2/\binom{r}{2}$. Let $Y'=X_2\setminus Y_{1,2}$. Note that if $a\in Y_{1,2}$, $b\in Y'$ and $\{a, b, c\}$ forms a 3-AP for some $c\in A$, then $P$ contains a subtemplate $P'$ such that $P'$ is a rainbow 3-AP with $|P'(a)|=|P'(b)|=|P'(c)|=1$. By Proposition \ref{prop:bipairs} and since $P$ is a good $r$-template, we have $n^{-1/3}n^2/4 \geq R(P)\geq (|Y_{1,2}||Y'|/9-3\xi n\cdot \min\{|Y_{1,2}|, |Y'|\})/2$.

If $|Y_{1,2}|\leq |Y'|$, then
$$|Y'|\leq \frac{9n^{5/3}}{2|Y_{1, 2}|}+27\xi n\le \frac{9n^{5/3}\binom{r}{2}}{2x_2}+27\xi n < \frac{9n^{2/3}\binom{r}{2}}{2(1-\delta- n^{-1/3}\log_2 n\log_2 r)}+27\xi n= o(n).$$
Note that in this case we have $|Y'|\geq x_2/2> (1-\delta- n^{-1/3}\log_2 n\log_2 r)n/2=\Theta(n)$, a contradiction. If $|Y_{1, 2}|\geq |Y'|$, then
$$|Y'|\leq \frac{n^{5/3}}{2(|Y_{1,2}|/9-3\xi n)}\leq \frac{n^{5/3}}{x_2/9-6\xi n} < \frac{n^{2/3}}{(1-\delta- n^{-1/3}\log_2 n\log_2 r)/9-6\xi}= O(n^{2/3}).$$
Thus $|Y_{1,2}|\geq x_2-|Y'|\geq (1-2\delta)n$.
\end{proof}

\noindent\begin{lemma}\label{lemma:counting} For sufficiently large $n$, let $r\geq 3$, $\delta=\frac{1}{34\log_2 n}$, $0\leq \xi < (\log_2r-1)n^{-1/3}\log_2n+\delta$ and $A\subseteq [n]$ with $|A|= (1-\xi)n$. For any two colors $i, j\in [r]$, let $\mathcal{P}_{i,j}$ be the set of good $r$-templates of $A$, in which there are at least $(1-2\delta)n$ integers with palette $\{i, j\}$. Then $g(\mathcal{P}_{i,j}, A)\leq 2^{|A|}(1+2^{-n/240})$.
\end{lemma}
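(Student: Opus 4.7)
The plan is to classify each coloring $c \in G(\mathcal{P}_{i,j}, A)$ by its \emph{exceptional set} $T_c := \{x \in A : c(x) \notin \{i,j\}\}$. Since $c$ lies in some good template $P \in \mathcal{P}_{i,j}$ and $P$ has at least $(1-2\delta)n$ positions with palette $\{i,j\}$---at each of which $c$ is forced to take a value in $\{i,j\}$---we immediately get $|T_c|\le 2\delta n$. Colorings with $T_c=\emptyset$ use only the two colors $i,j$, so they are automatically rainbow 3-AP-free and all belong to $G(\mathcal{P}_{i,j},A)$ via the template with $P(x)=\{i,j\}$ on $A$; they contribute the main term $2^{|A|}$.

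For the remaining colorings I fix a nonempty $T \subseteq A$ with $|T|\le 2\delta n$ and bound the number of colorings with $T_c=T$. On $T$ there are at most $(r-2)^{|T|}$ color choices (colors in $[r]\setminus\{i,j\}$), and on $A\setminus T$ each color is $i$ or $j$. Being rainbow 3-AP-free imposes at least the following constraint: for every 3-AP $\{t,y,z\}$ in $A$ with $t\in T$ and $y,z\in A\setminus T$ we must have $c(y)=c(z)$, for otherwise $\{c(t),c(y),c(z)\}=\{c(t),i,j\}$ would be three distinct colors. Defining $G_T$ to be the graph on $A\setminus T$ whose edges are precisely these pairs, the admissible 2-colorings of $A\setminus T$ are those constant on each connected component, giving at most $2^{\kappa(G_T)}$ options.

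The main obstacle is to show $\kappa(G_T)\le |A\setminus T|-n/4$ whenever $T\neq\emptyset$. I pick any $t\in T$ and use only the \emph{endpoint} edges of $G_T$ coming from $t$---the pairs $\{t+d,t+2d\}$ with $t+2d\le n$ and $\{t-d,t-2d\}$ with $t-2d\ge 1$, arising from 3-APs in which $t$ is the first or last term. Each vertex $v\ne t$ has at most two such endpoint neighbors ($2v-t$ and, when $v+t$ is even, $(v+t)/2$); since each endpoint edge either doubles or halves $|v-t|$, writing $|v-t|=2^a m$ with $m$ odd shows the endpoint subgraph decomposes into disjoint paths (one per side of $t$ and per odd $m$), hence is a forest with $\lfloor (n-t)/2\rfloor+\lfloor (t-1)/2\rfloor\ge (n-3)/2$ edges on $[n]\setminus\{t\}$. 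Restricting to the induced subgraph on $A\setminus T$ removes at most $(n-|A|)+(|T|-1)\le (\xi+2\delta)n$ vertices, each of degree at most $2$, so at most $2(\xi+2\delta)n$ edges are lost. Since $\delta=1/(34\log_2 n)$ and $\xi<(\log_2 r-1)n^{-1/3}\log_2 n+\delta$ are both $o(1)$, for $n$ large the surviving forest still has at least $n/4$ edges, giving $\kappa(G_T)\le |A\setminus T|-n/4$.

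Putting everything together,
$$g(\mathcal{P}_{i,j},A)-2^{|A|}\le \sum_{t=1}^{2\delta n}\binom{|A|}{t}(r-2)^t\cdot 2^{|A|-t-n/4}=2^{|A|-n/4}\sum_{t=1}^{2\delta n}\binom{|A|}{t}\Big(\frac{r-2}{2}\Big)^t.$$
Using $\binom{|A|}{t}\le (e|A|/t)^t$ with $t\le 2\delta n= n/(17\log_2 n)$ and $|A|\le n$, each summand is at most $\bigl(O(\log_2 n)\bigr)^{n/(17\log_2 n)}=2^{O(n\log_2\log_2 n/\log_2 n)}=2^{o(n)}$, so the whole sum is $2^{o(n)}$. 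Since $n/4-o(n)>n/240$ for $n$ large, we conclude $g(\mathcal{P}_{i,j},A)\le 2^{|A|}(1+2^{-n/240})$.
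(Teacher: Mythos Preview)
Your proof is correct and follows the same high-level strategy as the paper: split colorings by the exceptional set $T_c=\{x\in A:c(x)\notin\{i,j\}\}$, note $|T_c|\le 2\delta n$, count the $2^{|A|}$ colorings with $T_c=\emptyset$, and for nonempty $T$ bound (choices of $T$)$\times$(colors on $T$)$\times$(constrained 2-colorings of $A\setminus T$), using the 3-APs through a single element $t\in T$ to force equalities on $A\setminus T$.

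The one genuine difference is how you bound the number of constrained 2-colorings. The paper takes all three types of 3-APs through $t$ (with $t$ as first, middle, or last term), shows by a case analysis on the position of $t$ that $d(t)\ge n/5$, and then uses that the resulting constraint graph has maximum degree $3$ to conclude its rank is at least $d(t)/3$, yielding $2^{|A\setminus T|-n/15}$. You instead restrict to \emph{endpoint} 3-APs only, observe that the edges $\{t\pm d,t\pm 2d\}$ form a disjoint union of paths (one per odd $m$ on each side of $t$), hence a forest with $\lfloor(n-t)/2\rfloor+\lfloor(t-1)/2\rfloor\ge (n-3)/2$ edges, and then delete the $\le(\xi+2\delta)n$ missing vertices of degree $\le 2$ to retain a forest with at least $n/4$ edges inside $A\setminus T$. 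This avoids the case split on the location of $t$, makes the forest structure explicit rather than relying on a max-degree argument, and gives a slightly better constant ($n/4$ versus $n/15$), at the cost of ignoring the midpoint 3-APs. Either route suffices, and the final tail estimate $2^{|A|-n/4+o(n)}\le 2^{|A|-n/240}$ is handled the same way.
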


\begin{proof} For any rainbow 3-AP-free $r$-coloring $\sigma\in G(\mathcal{P}_{i,j}, A)$, let $I_{\sigma}=\{x\in A \colon\, \sigma(x)\notin \{i, j\}\}$. Then $|I_{\sigma}|\leq 2\delta n$. Let $\mathcal{G}_0=\{\sigma \in G(\mathcal{P}_{i,j}, A)\colon\, |I_{\sigma}|=0\}$ and $\mathcal{G}_1=\{\sigma \in G(\mathcal{P}_{i,j}, A)\colon\, |I_{\sigma}|\geq 1\}$. Then $g(\mathcal{P}_{i,j}, A)=|\mathcal{G}_0|+|\mathcal{G}_1|\leq 2^{|A|}+|\mathcal{G}_1|$.

Next, we consider the number of colorings in $\mathcal{G}_1$. We first choose a subset $I_0\subseteq A$ with $1\leq |I_0|\leq 2\delta n$. The number of options is at most $\sum_{1\leq \ell\leq 2\delta n}\binom{n}{\ell}$. We then use colors in $[r]\setminus \{i,j\}$ to color a fixed $I_0$. The number of colorings is at most $(r-2)^{2\delta n}$. Now we consider the number of ways to color $A\setminus I_0$ using color $i$ or $j$. For any fixed $x\in I_0$, let $d(x)$ be the number of ordered pairs $(a, b)\in (A\setminus I_0)^2$ with $a<b$ such that $\{a, b, x\}$ forms a 3-AP. We claim that $d(x)\geq n/5$. In fact, if we let $d_1=\left|\left\{(a, b)\in A^2\colon\, x<a<b,\ x+b=2a\right\}\right|$, $d_2=\left|\left\{(a, b)\in A^2\colon\, a<x<b,\ a+b=2x\right\}\right|$ and $d_3=\left|\left\{(a, b)\in A^2\colon\, a<b<x,\ a+x=2b\right\}\right|$, then $d(x)\geq d_i-2|I_0|\geq d_i-4\delta n$ for $1\leq i\leq 3$. Note that $d_1\geq\lfloor(n+x)/2\rfloor-x-\xi n$, $d_2\geq\min \{x-1, n-x\}-\xi n$ and $d_3\geq x-\lceil(1+x)/2\rceil-\xi n$. If $1\leq x\leq 2n/5$, then $d(x)\geq d_1-4\delta n\geq (n-x-1)/2-\xi n-4\delta n\geq n/5$. If $2n/5< x\leq 3n/5$, then $d(x)\geq d_2-4\delta n\geq \lfloor2n/5\rfloor-\xi n-4\delta n\geq n/5$. If $3n/5< x\leq n$, then $d(x)\geq d_3-4\delta n\geq (x-2)/2-\xi n - 4\delta n\geq n/5$. Hence, $d(x)\geq n/5$. Note that if $(a, b)\in (A\setminus I_0)^2$ and $\{a, b, x\}$ forms a 3-AP, then $\sigma(a)\in \{i,j\}$, $\sigma(b)\in \{i,j\}$ and $\sigma(x)\in [r]\setminus \{i,j\}$, so $\sigma(a)=\sigma(b)$ in order to avoid a rainbow 3-AP. Thus the number of ways to color $A\setminus I_0$ is at most $2^{|A|-|I_0|-d(x)/3}\leq 2^{|A|-n/15}$.

Thus
\begin{align*}
  g(\mathcal{P}_{i,j}, A)\leq &~2^{|A|}+|\mathcal{G}_1|\leq~ 2^{|A|}+\sum_{1\leq \ell\leq 2\delta n}\binom{n}{\ell}(r-2)^{2\delta n}2^{|A|-n/15}\\
   \leq &~2^{|A|}+2\delta n \frac{n^{2\delta n}}{(2\delta n)!}r^{2\delta n}2^{|A|-n/15}\leq~ 2^{|A|}+2^{2\delta n\log_2n}2^{2\delta n\log_2r}2^{|A|-n/15}\\
   \leq &~2^{|A|}+2^{n/17+(n\log_2 r)/(17\log_2 n)+|A|-n/15}\leq~ 2^{|A|}+2^{|A|-n/240}.
\end{align*}
This completes the proof of Lemma~\ref{lemma:counting}.
\end{proof}

Now we have all the ingredients to present our proof of Theorem \ref{th:3-AP}.

\begin{proof}[Proof of Theorem~\ref{th:3-AP}] Let $\mathcal{C}$ be the collection of $r$-templates given by Theorem \ref{th:3-APCT}. Let $\delta= (34\log_2n)^{-1}$, and let $\mathcal{C}_1=\{P\in\mathcal{C}\colon\, g(P, A)\leq 2^{(1-\delta)n}\}$ and $\mathcal{C}_2=\{P\in\mathcal{C}\colon\, g(P, A)> 2^{(1-\delta)n}\}$. For each $P\in \mathcal{C}_2$, we have $|P(x)|\geq 1$ for every $x\in A$; otherwise $g(P, A)=0$. Moreover, we have $R(P)\leq n^{-1/3}f(n)$ by Theorem \ref{th:3-APCT} (ii). Thus every element $P\in \mathcal{C}_2$ is a good $r$-template of $A$. By Lemmas \ref{lemma:x3}, \ref{lemma:stability} and \ref{lemma:counting}, we have
$$g(\mathcal{C}_2, A)\leq \sum_{1\leq i<j\leq r}2^{|A|}\left(1+2^{-n/240}\right)\leq \binom{r}{2} 2^{|A|}\left(1+2^{-n/240}\right).$$
By Theorem \ref{th:3-APCT} (iii), we have
$$g(\mathcal{C}_1, A)\leq |\mathcal{C}_1|2^{(1-\delta)n}\leq |\mathcal{C}|2^{(1-\delta)n}\leq 2^{cn^{2/3}(\log_2 n)^2+(1-1/(34\log_2n))n}<2^{(1-1/(35\log_2n))n}.$$
Combining the above and using Theorem \ref{th:3-APCT} (i), the number of rainbow 3-AP-free $r$-colorings of $A$ is at most
$$g(\mathcal{C}_1, A)+g(\mathcal{C}_2, A)\leq 2^{(1-1/(35\log_2n))n}+\binom{r}{2} 2^{|A|}\left(1+2^{-n/240}\right)\leq \binom{r}{2} 2^{|A|}+2^{-\frac{n}{36\log_2n}}2^n.$$
This completes the proof of Theorem~\ref{th:3-AP}.
\end{proof}

Next, we present our proof of Theorem \ref{th:3-APsubset}.

\begin{proof}[Proof of Theorem~\ref{th:3-APsubset}] Let $n$ be a sufficiently large integer. For any subset $A\subset [n]$ with $(10+8\log_2 r)n/(13+8\log_2 r)\leq |A|< n$, by Theorem \ref{th:3-AP}, we have $g_r(A)\leq \binom{r}{2} 2^{n-1}+2^{-n/(36\log_2n)}2^n\leq (r(r-1)/4+o(1)) 2^{n}< \binom{r}{2}2^{n}-r^2+2r\leq g_r([n])$. For any subset $A\subset [n]$ with $|A|\leq n/\log_2 r$, we have $g_r(A)\leq r^{|A|}\leq 2^{|A|\log_2 r}\leq 2^n<g_r([n])$. In the following, we may assume that $|A|=\alpha n$ with $1/\log_2 r < \alpha < (10+8\log_2 r)/(13+8\log_2 r)$.

Let $k=\max\{n_0, 420(\log_2 r)^2\}$, where $n_0$ is the obtained value when applying Theorem~\ref{th:3-AP} with $\xi=3/(13+8\log_2 r)$. Let $\delta=1/(15(\log_2 r)^2)$ and $n\geq n_1=sz(k, \delta)$. By Szemer\'{e}di's theorem, $A$ contains at least $(\alpha -\delta)n/k$ pairwise disjoint $k$-APs. For any $k$-AP $F=\{x, x+y, \ldots, x+(k-1)y\}$, let $\varphi$ be a mapping between $[k]$ and $F$ such that $\varphi (i)=x+(i-1)y$ for every $i\in [k]$. Then there is a bijection $\phi$ between the 3-APs in $[k]$ and the 3-APs in $F$: $\phi (\{a, b, c\})=\{\varphi(a), \varphi(b), \varphi(c)\}$ for any 3-AP $\{a, b, c\}$ in $[k]$. Therefore, there is a bijection between the rainbow 3-AP-free $r$-colorings of $[k]$ and that of $F$, and thus $g_r(F)=g_r([k])$. By the choice of $k$, we have $g_r(F)=g_r([k])\leq \binom{r}{2} 2^{k}+2^{-k/(36\log_2k)}2^k\leq 2^{k+2\log_2 r}$. Then
\begin{align*}
  g_r(A)\leq &~r^{\delta n}\left(2^{k+2\log_2 r}\right)^{\frac{(\alpha -\delta)n}{k}}=~2^{\delta n \log_2 r}2^{(\alpha -\delta)n+\frac{2(\alpha -\delta)n\log_2 r}{k}}\\
   < &~2^{\alpha n+\delta n\log_2 r+\frac{2\alpha n \log_2 r}{k}}<~2^{\frac{(10+8\log_2 r)n}{13+8\log_2 r}+\frac{n}{15\log_2 r}+\frac{2\alpha n \log_2 r}{k}}\\
   < &~2^{\frac{(10+8\log_2 r)n}{13+8\log_2 r}+\frac{n}{14\log_2 r}}<~2^n<~g_r([n]).
\end{align*}
This completes the proof of Theorem~\ref{th:3-APsubset}.
\end{proof}

\section{Colorings of $\mathbb{Z}_n$ with no rainbow 3-AP}
\label{sec:proofZp}

We begin with the following two lemmas.

\noindent\begin{lemma}\label{le:Zn3} {\normalfont (\cite[Theorem~3.5]{JLMNR})} Let $n$ be a positive integer. Then $aw(\mathbb{Z}_n, 3)=3$ if and only if one of the following holds:
\begin{itemize}
\item[{\rm (i)}] $n$ is a power of 2;
\item[{\rm (ii)}] $n$ is prime and 2 is a generator of $\mathbb{Z}^{\times}_n$;
\item[{\rm(iii)}] $n$ is prime, ${\rm ord}_n(2)=\frac{n-1}{2}$, and $\frac{n-1}{2}$ is odd.
\end{itemize}
\end{lemma}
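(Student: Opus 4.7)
The result is attributed to \cite{JLMNR}, so the minimal proof is to cite that reference; for a self-contained argument the plan is to split the biconditional and handle the two directions by rather different techniques.

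For the ``if'' direction---showing that (i), (ii), or (iii) forces every exact 3-coloring of $\mathbb{Z}_n$ to contain a rainbow 3-AP---I would handle the three cases in turn. For (i) with $n = 2^k$, I would induct on $k$: the subgroup $2\mathbb{Z}_{2^k}\cong \mathbb{Z}_{2^{k-1}}$ inherits the coloring, and either the induction hypothesis applies on the restriction, or some color class is confined to the odd coset, in which case 3-APs with odd common difference combined with the doubling map produce a rainbow triple directly. For (ii) and (iii), where $n = p$ is an odd prime, my main tool will be the action of $x \mapsto 2x$ on $\mathbb{Z}_p \setminus \{0\}$: its orbits have size ${\rm ord}_p(2)$, and a 3-AP centered at $b$ has the form $\{b - d, b, b + d\}$, so reflection pairs $\{x, -x\}$ govern 3-APs through the origin. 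The order conditions in (ii) and (iii) are calibrated precisely so that the doubling orbits interact with these reflection pairs in a way incompatible with an exact 3-coloring that avoids rainbow 3-APs.

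For the ``only if'' direction I would construct, for each $n$ not in the list, an explicit exact 3-coloring of $\mathbb{Z}_n$ with no rainbow 3-AP. The natural candidates are colorings whose classes are unions of cosets of a proper subgroup of $\mathbb{Z}_n$ or of $\mathbb{Z}_n^\times$: if $n$ has an odd divisor $m$ with $3 \le m < n$, I would pull back a suitable coloring of $\mathbb{Z}_{n/m}$ along the quotient map and then amalgamate cosets into three classes; if $n = p$ is prime with ${\rm ord}_p(2)$ failing the thresholds in (ii) and (iii), I would color using cosets of $\langle 2 \rangle \le \mathbb{Z}_p^\times$ together with $\{0\}$, and verify that no solution of $x + y = 2z$ has $x, y, z$ in three distinct classes by invoking the order condition.

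The hard part will be this ``only if'' direction: each coloring is easy to write down, but the verification is case-heavy---the factorization of $n$, the parity of $(p-1)/{\rm ord}_p(2)$, and the parity of ${\rm ord}_p(2)$ itself all intervene---and arranging the constructions to cover every excluded $n$ without leaving gaps is precisely the technical core of the argument in \cite{JLMNR}.
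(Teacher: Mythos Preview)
The paper does not prove this lemma at all; it is stated with the citation \cite{JLMNR} and used as a black box in the proof of Theorem~\ref{th:exactZp}. You correctly identify this in your first sentence, so your proposal matches the paper's treatment exactly. The remainder of your proposal---the self-contained sketch of both directions---goes well beyond anything the paper attempts, and since there is no proof in the paper to compare against, there is nothing further to assess on that score.
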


\noindent\begin{lemma}\label{le:Zp3} {\normalfont (\cite[Proposition~3.5]{BEHH})} For any prime $p$, we have $3\leq aw(\mathbb{Z}_p, 3)\leq 4$, and $aw(\mathbb{Z}_p, 3)=4$ implies that every rainbow 3-AP-free coloring of $\mathbb{Z}_p$ using exactly three colors contains a color which is used exactly once.
\end{lemma}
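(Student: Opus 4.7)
The lower bound $aw(\mathbb{Z}_p,3)\ge 3$ is immediate: any coloring using only two colors admits no rainbow 3-AP. For both of the remaining claims the plan is to exploit the following consequence of rainbow 3-AP-freeness. If $c$ is such a coloring of $\mathbb{Z}_p$ with classes $C_1,\dots,C_r$, then for all distinct $i,j$ one has $2C_j-C_i \subseteq C_i\cup C_j$, because any $2b-a\in C_k$ with $a\in C_i$, $b\in C_j$, $k\notin\{i,j\}$ would yield a rainbow 3-AP centered at $b$. Cauchy--Davenport then gives $|C_i|+|C_j|-1 \le |2C_j-C_i| \le |C_i|+|C_j|$, so each pair $(C_i,C_j)$ is extremal or near-extremal in Cauchy--Davenport; Vosper's theorem, applied whenever $|C_i|,|C_j|\ge 2$ and the sizes are within the Vosper range, then forces $C_i$ and $C_j$ to be arithmetic progressions sharing a common difference, while the fill-up regime $|2C_j-C_i|=|C_i|+|C_j|$ is handled directly.

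For $aw(\mathbb{Z}_p,3)\le 4$, the plan is to assume an exact 4-coloring $c$ with classes $C_1,\dots,C_4$ and no rainbow 3-AP, and derive a contradiction. If every $|C_i|\ge 2$, the pair-analysis propagates a common difference $d$ through all six pairs, so after rescaling by $d^{-1}$ the $C_i$ become four disjoint nonempty cyclic intervals partitioning $\mathbb{Z}_p$; choosing $a$ and $c$ at the boundaries of two of the intervals (adjusting parities if necessary) makes $(a+c)/2\in\mathbb{Z}_p$ land in a third interval, giving a rainbow 3-AP. If instead some class, say $C_1=\{x\}$, is a singleton, then the middle-through-$x$ constraint forces $c(a)=c(2x-a)$ for every $a\neq x$, and the endpoint-through-$x$ constraint forces $c(b)=c(2b-x)$ for every $b\neq x$; after translating $x$ to $0$, the induced coloring on $\mathbb{Z}_p^\times$ is invariant under both negation and doubling, so the three remaining color classes are unions of cosets of $H=\langle 2,-1\rangle\le\mathbb{Z}_p^\times$. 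A second Cauchy--Davenport application to three such cosets (which are available because three colors appear on $\mathbb{Z}_p^\times$, so $(p-1)/|H|\ge 3$) supplies the required rainbow 3-AP.

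For the final clause, the plan is to argue the contrapositive: if $c$ is a rainbow 3-AP-free exact 3-coloring of $\mathbb{Z}_p$ with all three color classes of size $\ge 2$, then the same pair-analysis forces $C_1,C_2,C_3$ to be three nonempty cyclic intervals (after rescaling) of sizes $\ge 2$, and an explicit rainbow 3-AP spanning the three intervals contradicts rainbow-freeness. Hence whenever $aw(\mathbb{Z}_p,3)=4$, which guarantees at least one rainbow 3-AP-free exact 3-coloring, some color class of that 3-coloring must be a singleton.

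The main obstacle is the structural conversion from the tight Cauchy--Davenport inequality to the arithmetic-progression picture: Vosper's theorem has a small list of exceptional configurations (singletons, classes of size $p-1$, sumsets of size $p-1$) that each require separate verification, and in the singleton subcase of the 4-color argument the cleanest route involves analyzing the multiplicative subgroup $\langle 2,-1\rangle$ of $\mathbb{Z}_p^\times$, which splits according to whether $-1\in\langle 2\rangle$ and may require handling very small-index subgroups by direct inspection for small primes.
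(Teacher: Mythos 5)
The paper does not prove this lemma; it is cited verbatim from Butler et al.\ \cite{BEHH}, so there is no in-paper proof to compare against, and your proposal has to stand on its own.

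Your lower-bound observation and the key containment $2C_j - C_i \subseteq C_i \cup C_j$ (equivalently $(C_i+C_j)\cap 2C_k = \emptyset$ for the third class $C_k$) are correct, and the Cauchy--Davenport plus Vosper attack is a sensible line. The serious problem is the so-called fill-up regime. Because the classes are disjoint, Cauchy--Davenport only pins $|C_i+C_j|$ to one of the two values $|C_i|+|C_j|-1$ or $|C_i|+|C_j|$, and Vosper applies only in the first case. You assert the second case is ``handled directly,'' but you give no argument, and I don't see one: when $C_i+C_j$ equals $\mathbb{Z}_p\setminus 2C_k$ exactly, Vosper gives nothing, and one would need a Freiman/Hamidoune--R{\o}dseth type $3k-4$ result (or some other device) to recover structure. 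Until this case is closed, neither the four-color all-classes-$\ge 2$ argument nor the contrapositive three-color argument is complete. A second gap is the conclusion of the singleton subcase: the claim that ``a second Cauchy--Davenport application to three such cosets supplies the required rainbow 3-AP'' does not follow. With $C_2,C_3,C_4$ unions of cosets of $H=\langle 2,-1\rangle$, Cauchy--Davenport gives $|C_2+C_4|\ge |C_2|+|C_4|-1 = p-1-|C_3|-1$, which is strictly smaller than $|\mathbb{Z}_p^\times\setminus 2C_3|$, so no forced intersection with $2C_3$; in particular when $|H|=(p-1)/3$, exactly the borderline case you need, the estimate is too weak. (One could instead reduce the singleton four-color case to the three-color statement by recoloring $0$ into one of the other classes, but that again relies on the three-color statement, which is blocked by the fill-up gap.) The Vosper exceptional cases you flag are, in fact, harmless here since $|C_i+C_j|\le p-|C_k|\le p-2$, so that part of your worry can be dispatched, but the two gaps above are substantive.
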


Now we have all the ingredients to present our proof of Theorem \ref{th:exactZp}.

\begin{proof}[Proof of Theorem~\ref{th:exactZp}] If $aw(\mathbb{Z}_p, 3)=3$, then it suffices to show that $c \cdot {\rm ord}_p(2)=p-1$. By Lemma \ref{le:Zn3}, either ${\rm ord}_p(2)=p-1$, or ${\rm ord}_p(2)=(p-1)/2$ and $(p-1)/2$ is odd. In both cases, we have $c \cdot {\rm ord}_p(2)=p-1$.

If $aw(\mathbb{Z}_p, 3)=4$, then by Lemma \ref{le:Zp3}, every rainbow 3-AP-free coloring of $\mathbb{Z}_p$ using exactly three colors contains a color which is used exactly once. Let $c$ be a rainbow 3-AP-free coloring of $\mathbb{Z}_p$ using exactly three colors. Without loss of generality, we may assume that $c(0)=1$ and $c(i)\in \{2, 3\}$ for all $i\in \mathbb{Z}_p\setminus \{0\}$. In order to avoid a rainbow 3-AP, we have $c(i)=c(-i)$ and $c(i)=c(2i)$ for each $i\in \mathbb{Z}_p\setminus \{0\}$.

For every integer $\ell$ with $1\leq \ell\leq \frac{p-1}{2}$, let $Q_{\ell}=\{(\ell\cdot 2^i)\bmod{p}\colon\, i\in \mathbb{Z}^+\}\cup \{-(\ell\cdot 2^i)\bmod{p}\colon\, i\in \mathbb{Z}^+\}$. For every integer $j$ with $1\leq j\leq \frac{p-1}{2}$, let $I_1=\{1\}$, $I_j=\{j\}$ if $j\geq 2$ and $j\notin \bigcup_{\ell \in \bigcup_{m=1}^{j-1}I_m}Q_{\ell}$, and $I_j=\emptyset$ otherwise. Then for every $1\leq \ell\leq \frac{p-1}{2}$, all the elements in $Q_{\ell}$ should be colored with the same color. For any $j$ with $2\leq j\leq \frac{p-1}{2}$, if $I_j=\emptyset$, then $j\in Q_{\ell}$ for some $\ell$ with $\ell \leq j-1$ and $I_{\ell}\neq \emptyset$, which implies that $Q_j\subseteq Q_{\ell}$. Thus we only need to consider the sets $Q_j$ and $I_j$ with $I_j\neq \emptyset$. Moreover, for any $j_1\neq j_2$ with $I_{j_1}\neq \emptyset$ and $I_{j_2}\neq \emptyset$, we have $Q_{j_1}\cap Q_{j_2}=\emptyset$. Note that $\bigcup_{\ell \in \bigcup_{j=1}^{(p-1)/2}I_j}Q_{\ell}= \mathbb{Z}_p \setminus \{0\}$. Furthermore, for any $\ell$ with $I_{\ell}\neq \emptyset$, we have $|Q_{\ell}|={\rm ord}_p(2)$ if ${\rm ord}_p(2)$ is even, and $|Q_{\ell}|=2 \cdot {\rm ord}_p(2)$ if ${\rm ord}_p(2)$ is odd.

Next, we show that if $\{0, a, b\}$ forms a 3-AP in $\mathbb{Z}_p$, then $a, b$ is contained in the same $Q_j$ for some $j$ with $1\leq j\leq \frac{p-1}{2}$ and $I_j\neq \emptyset$. For a contradiction, suppose that $a\in Q_{j_1}$ and $b\in Q_{j_2}$ with $Q_{j_1}\cap Q_{j_2}=\emptyset$. Then $2a \pmod{p}\in Q_{j_1}$, $2b \pmod{p}\in Q_{j_2}$ and $-b \pmod{p}\in Q_{j_2}$. Since $\{0, a, b\}$ forms a 3-AP, we have $2a\equiv b \pmod{p}$, $a\equiv 2b \pmod{p}$ or $a\equiv -b \pmod{p}$, which is impossible.

Now we have all the ingredients to count the number of rainbow 3-AP-free colorings of $\mathbb{Z}_p$ using exactly three colors. We first choose an element $x\in\mathbb{Z}_p$. The number of options is $p$. We next choose a color $i\in[r]$ to color the element $x$, and then choose a set of two distinct colors $\{j, k\}\subseteq [r]\setminus \{i\}$. The number of options is $r\binom{r-1}{2}$. Finally, for any fixed choices of $i$, $j$, $k$ and $x$ (say $i=1$, $j=2$, $k=3$ and $x=0$), we color $\mathbb{Z}_p\setminus \{x\}$ using colors $j$ and $k$, so that all the elements in $Q_{j}$ are colored with the same color for each $j$ with $I_{j}\neq \emptyset$, and each of the two colors is used at least once. The number of ways is $2^{\sum_{j=1}^{(p-1)/2}|I_j|}-2$. Therefore, we have
\begin{align*}
  g_r(\mathbb{Z}_p)= &~\binom{r}{2}2^p-r^2+2r+ r\binom{r-1}{2}p\left(2^{\sum_{j=1}^{(p-1)/2}|I_j|}-2\right)\\
   = &~\binom{r}{2}2^p-r^2+2r+ r\binom{r-1}{2}p\left(2^{\frac{p-1}{c \cdot {\rm ord}_p(2)}}-2\right).
\end{align*}
This completes the proof of Theorem~\ref{th:exactZp}.
\end{proof}

By Lemma \ref{le:Zn3} (i), if $n$ is a power of 2, then $g_r(\mathbb{Z}_n)=\binom{r}{2}2^{n}-r^2+2r$. Note that every integer $n$ ($\geq 3$) can be decomposed into prime factors. Using Theorem~\ref{th:exactZp}, we can derive the following recurrence inequality.

\noindent\begin{corollary}\label{co:Znp} For any positive integers $n\geq 1$, $r\geq 3$ and prime $p\geq 3$, we have
\begin{align*}
  g_r(\mathbb{Z}_{np})\geq &~\binom{r}{2}2^{np}-r^2+2r+pr(g_r(\mathbb{Z}_n)-(r-1)2^n+r-2)\\
    &~+\binom{r}{2}p\Big(2^{\frac{p-1}{c \cdot {\rm ord}_p(2)}}-2\Big)(g_r(\mathbb{Z}_n)-2^n),
\end{align*}
where $c=1$ if ${\rm ord}_p(2)$ is even, and $c=2$ otherwise.
\end{corollary}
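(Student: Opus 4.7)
The plan is to lower-bound $g_r(\mathbb{Z}_{np})$ by exhibiting three pairwise disjoint families $\mathcal{F}_1,\mathcal{F}_2,\mathcal{F}_3$ of rainbow 3-AP-free $r$-colorings of $\mathbb{Z}_{np}$ whose cardinalities are exactly the three summands on the right-hand side. The structural step is to decompose $\mathbb{Z}_{np}$ into the $p$ cosets $H_k=k+p\mathbb{Z}_{np}$ of the subgroup $p\mathbb{Z}_{np}\cong\mathbb{Z}_n$; each $H_k$ carries a 3-AP structure isomorphic to that of $\mathbb{Z}_n$. Reducing the relation $a+c\equiv 2b\pmod{np}$ modulo $p$ and using that $p$ is odd, one sees that every 3-AP in $\mathbb{Z}_{np}$ either lies entirely in a single coset, or has its three elements in three pairwise distinct cosets $H_{k_1},H_{k_2},H_{k_3}$ with $\{k_1,k_2,k_3\}$ itself a 3-AP in $\mathbb{Z}_p$; moreover, if one of these indices is $k_0$, the other two residues $\ell,\ell'$ satisfy $\ell'\in\{-\ell,\,2\ell,\,\ell/2\}$ after translation by $-k_0$, so they lie in the same $\langle 2,-1\rangle$-orbit of $\mathbb{Z}_p^{\times}$ (the orbit structure used in the proof of Theorem~\ref{th:exactZp}).

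The three families are defined as follows. $\mathcal{F}_1$ is the set of all 2-colorings of $\mathbb{Z}_{np}$, contributing $\binom{r}{2}2^{np}-r^2+2r$ by (\ref{eq:1}). $\mathcal{F}_2$ is obtained from triples $(k_0,c_0,\chi_0)$ where $\chi_0$ is a rainbow 3-AP-free coloring of $H_{k_0}$ whose image is \emph{not} contained in $\{c_0,c\}$ for any $c\in[r]$: paint $H_{k_0}$ by $\chi_0$ and every other coset uniformly by the single color $c_0$. An inclusion count of the excluded $\chi_0$'s (those that are monochromatic of any color, or 2-colorings using $c_0$ and one other color) gives $(r-1)2^n-(r-2)$, whence $|\mathcal{F}_2|=pr(g_r(\mathbb{Z}_n)-(r-1)2^n+r-2)$. $\mathcal{F}_3$ is obtained from quadruples $(k_0,\{c_1,c_2\},\tau,\chi_0)$ where $\tau\colon\mathbb{Z}_p\setminus\{k_0\}\to\{c_1,c_2\}$ is non-monochromatic and constant on each $\langle 2,-1\rangle$-orbit of $\mathbb{Z}_p^{\times}$ (after translating $k_0\mapsto 0$), and $\chi_0$ is rainbow 3-AP-free with image not contained in $\{c_1,c_2\}$: paint each $H_k$ with $k\neq k_0$ monochromatically by $\tau(k)$ and paint $H_{k_0}$ by $\chi_0$. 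Exactly as in the proof of Theorem~\ref{th:exactZp}, the admissible $\tau$'s number $2^{(p-1)/(c\cdot\mathrm{ord}_p(2))}-2$, so $|\mathcal{F}_3|=\binom{r}{2}p(2^{(p-1)/(c\cdot\mathrm{ord}_p(2))}-2)(g_r(\mathbb{Z}_n)-2^n)$. Rainbow 3-AP-freeness is immediate from the structural observation: within-coset 3-APs are controlled by $\chi_0$ (or by monochromaticity), and for any cross-coset 3-AP the orbit condition on $\tau$ (or the constant value $c_0$) forces the two non-$H_{k_0}$ cosets to share a color, so at most two colors appear.

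The main obstacle is the disjointness analysis. Disjointness from $\mathcal{F}_1$ is immediate from the image restrictions on $\chi_0$. In any $\mathcal{F}_2$-coloring the $p-1\geq 2$ outside cosets share the single color $c_0$, whereas in any $\mathcal{F}_3$-coloring the outside cosets use both $c_1$ and $c_2$; this handles $\mathcal{F}_2\cap\mathcal{F}_3=\emptyset$ as well as the internal uniqueness of $(k_0,c_0,\chi_0)\in\mathcal{F}_2$ (the special coset is the unique non-monochromatic one, since $\chi_0$ is forced non-monochromatic by the exclusion). The delicate case is uniqueness within $\mathcal{F}_3$ when $k_0\neq k_0'$: a short case analysis forces $\chi_0$ to be monochromatic with a color in $\{c_1',c_2'\}\setminus\{c_1,c_2\}$, the remaining $p-2$ outside cosets to take values in $\{c_1,c_2\}\cap\{c_1',c_2'\}$, and the orbit condition then demands that the orbit of $k_0'-k_0$ in $\mathbb{Z}_p^{\times}$ be the singleton $\{k_0'-k_0\}$ — impossible since every $\langle 2,-1\rangle$-orbit of $\mathbb{Z}_p^{\times}$ has size $c\cdot\mathrm{ord}_p(2)\geq 2$. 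Summing $|\mathcal{F}_1|+|\mathcal{F}_2|+|\mathcal{F}_3|$ then yields the claimed inequality.
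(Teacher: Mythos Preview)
Your proposal is correct and follows essentially the same approach as the paper: the same coset decomposition (your $H_k$ are the paper's residue classes $R_k$ modulo $p$), the same three families of colorings, and the same counts. The one noteworthy difference is that you explicitly verify pairwise disjointness and injectivity within each family---in particular the ``delicate case'' for $\mathcal{F}_3$ with $k_0\neq k_0'$, which the paper does not address; your orbit-size argument there is correct, since $c\cdot\mathrm{ord}_p(2)\ge 2$ for every odd prime $p$.
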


\begin{proof} Note that there are exactly $\binom{r}{2}2^{np}-r^2+2r$ 2-colorings of $\mathbb{Z}_{np}$. In the following, we will construct colorings using at least three colors. Let $R_0, R_1, \ldots, R_{p-1}$ be the residue classes modulo $p$ in $\mathbb{Z}_{np}$, that is, $R_i=\{x\in \mathbb{Z}_{np}\colon\, x\equiv i \pmod{p}\}$ for each $i\in \{0, 1, \ldots, p-1\}$.

We choose an arbitrary integer $q\in \{0, 1, \ldots, p-1\}$ and an arbitrary color $\ell\in [r]$. Let $c_n$ be a rainbow 3-AP-free $r$-coloring of $\mathbb{Z}_n$ which contains at least two colors in $[r]\setminus \{\ell\}$. Let $c_{np}$ be a coloring of $\mathbb{Z}_{np}$ such that $c_{np}(ip+q)=c_n(i)$ for $i\in \{0, 1, \ldots, n-1\}$, and $c_{np}(u)=\ell$ for any $u\in \mathbb{Z}_{np}\setminus R_q$. Note that if $\{x, y, z\}$ forms a 3-AP in $\mathbb{Z}_{np}$ with $x, y\in R_q$, then $z\in R_q$ since $p$ is a prime. Thus $c_{np}$ is a rainbow 3-AP-free $r$-coloring of $\mathbb{Z}_{np}$. The number of such colorings $c_{np}$ is $pr(g_r(\mathbb{Z}_n)-r-(r-1)(2^n-2))= pr(g_r(\mathbb{Z}_n)-(r-1)2^n+r-2)$.

If $aw(\mathbb{Z}_p, 3)=4$, then we can further construct another family of colorings. By Lemma \ref{le:Zp3}, every rainbow 3-AP-free coloring of $\mathbb{Z}_p$ using exactly three colors contains a color which is used exactly once. We call the color used exactly once the special color, and the other two colors non-special. Let $c_p$ be a rainbow 3-AP-free coloring of $\mathbb{Z}_p$ using exactly three colors, and assume that the special color is used on $q$. Let $c_n$ be a rainbow 3-AP-free $r$-coloring of $\mathbb{Z}_n$ such that $c_n$ contains at least one color which is not one of the two non-special colors in $c_p$. Let $c_{np}$ be a coloring of $\mathbb{Z}_{np}$ such that $c_{np}(ip+q)=c_n(i)$ for $i\in \{0, 1, \ldots, n-1\}$, and $c_{np}(u)=c_p(j)$ for $u\in R_{j}$ and $j\in \{0, 1, \ldots, p-1\}\setminus \{q\}$. Note that if $\{x, y, z\}$ forms a 3-AP in $\mathbb{Z}_{np}$ with $x, y\in R_i$ for some $i\in \{0, 1, \ldots, p-1\}$, then $z\in R_i$ since $p$ is a prime. Thus there is no 3-AP in $\mathbb{Z}_{np}$ such that the three elements are contained in exactly two different residue classes.

If $\{x, y, z\}$ forms a 3-AP in $\mathbb{Z}_{np}$ such that $x, y, z$ are contained in the same residue class, then this 3-AP is not rainbow since $c_n$ is rainbow 3-AP-free. If $\{x, y, z\}$ forms a 3-AP in $\mathbb{Z}_{np}$ such that $x, y, z$ are contained in three different residue classes, then we may assume that $x\in R_{j_1}$, $y\in R_{j_2}$, $z\in R_{j_3}$ and $x+z\equiv 2y \pmod{np}$, where $j_1, j_2$ and $j_3$ are three pairwise distinct elements in $\{0, 1, \ldots, p-1\}$. Thus $x=i_1p+j_1$, $y=i_2p+j_2$, $z=i_3p+j_3$ for some integers $i_1, i_2$ and $i_3$. Since $x+z\equiv 2y \pmod{np}$, we may further assume that $i_1p+j_1+i_3p+j_3= 2(i_2p+j_2)+knp$ for some integer $k$. Thus $j_1+j_3-2j_2=(2i_2-i_1-i_3+kn)p$, which implies that $j_1+j_3\equiv 2j_2 \pmod{p}$, and thus $\{j_1, j_2, j_3\}$ forms a 3-AP in $\mathbb{Z}_p$. Since $c_p$ is rainbow 3-AP-free, the three colors $c_p(j_1)$, $c_p(j_2)$ and $c_p(j_3)$ are not pairwise distinct. Since $c_p(q)$ is a special color in $c_p$, we further have that $c_{np}(x)$, $c_{np}(y)$ and $c_{np}(z)$ are not pairwise distinct. Then the 3-AP $\{x, y, z\}$ is not rainbow in $\mathbb{Z}_{np}$. Thus $c_{np}$ is a rainbow 3-AP-free $r$-coloring of $\mathbb{Z}_{np}$.

By Theorem \ref{th:exactZp}, the number of colorings $c_{p}$ is $r\binom{r-1}{2}p\Big(2^{\frac{p-1}{c \cdot {\rm ord}_p(2)}}-2\Big)$. For any $q\in \{0, 1, \ldots, p-1\}$ and any two distinct colors $i, j\in [r]$, there are $r-2$ colorings $c_{p}$ such that the special color is used on $q$, colors $i$ and $j$ are two non-special colors, and the colorings of $\mathbb{Z}_p\setminus \{q\}$ are the same. Moreover, for any fixed $c_p$, the number of options of $c_n$ is $g_r(\mathbb{Z}_n)-2^n$. Therefore, the number of colorings $c_{np}$ is $\frac{1}{r-2}r\binom{r-1}{2}p\Big(2^{\frac{p-1}{c \cdot {\rm ord}_p(2)}}-2\Big)(g_r(\mathbb{Z}_n)-2^n)=\binom{r}{2}p\Big(2^{\frac{p-1}{c \cdot {\rm ord}_p(2)}}-2\Big)(g_r(\mathbb{Z}_n)-2^n)$. This completes the proof of Corollary~\ref{co:Znp}.
\end{proof}

\end{document}